  \newcommand{\op}{{\mathord\mathsf{op}}}
  \newcommand{\slice}{\mathbin\downarrow}
  \newcommand{\bd}{\partial}
  \newcommand{\gprod}{\otimes}
  \newcommand{\id}[1][]{\operatorname{id}_{#1}}
  \def\horn#1{\expandafter\horn@i#1,,\@nil}
  \def\horn@i#1,#2,#3\@nil{\Lambda^{#2}[#1]}
  \newcommand{\uvar}{\mathord{\relbar}}
  \renewcommand{\tilde}{\widetilde}
  \newcommand{\N}{{\mathord\mathbb{N}}}
  \newcommand{\ncat}[1]{\mathsf{#1}}
  \newcommand{\sSet}{\ncat{sSet}}
  \newcommand{\from}{\colon}
  \newcommand{\ito}{\hookrightarrow}
  \declaretheorem[style=definition,within=section]{definition}
  \declaretheorem[style=definition,numberlike=definition]{example}
  \declaretheorem[style=definition,numberlike=definition]{remark}
  \declaretheorem[style=definition,numberlike=definition]{notation}
  \declaretheorem[style=plain,numberlike=definition]{corollary}
  \declaretheorem[style=plain,numberlike=definition]{lemma}
  \declaretheorem[style=plain,numberlike=definition]{proposition}
  \declaretheorem[style=plain,numberlike=definition]{theorem}
  \declaretheorem[style=plain,numbered=no,name=Theorem]{theorem*}
  \Crefname{corollary}{Corollary}{Corollaries}
  \Crefname{definition}{Definition}{Definitions}
  \Crefname{lemma}{Lemma}{Lemmas}
  \Crefname{proposition}{Proposition}{Propositions}
  \Crefname{remark}{Remark}{Remarks}
  \Crefname{theorem}{Theorem}{Theorems}
  \newlist{axioms}{enumerate}{1}
  \Crefname{axiomsi}{}{}
  \newenvironment{tikzeq*}
  {
    \begingroup
    \begin{equation*}
    \begin{tikzpicture}[baseline=(current bounding box.center)]
  }
  {
    \end{tikzpicture}
    \end{equation*}
    \endgroup
    \ignorespacesafterend
  }
  \tikzset
  {
    diagram/.style=
    {
      matrix of math nodes,
      column sep={4.3em,between origins},
      row sep={4em,between origins},
      text height=1.5ex,
      text depth=.25ex
    },
    over/.style={preaction={draw=white,-,line width=6pt}},
    every to/.style={font=\footnotesize},
    inj/.style={right hook->},
    surj/.style={-{Latex[open]}},
    cof/.style={>->},
    fib/.style={->>},
  }
  \DeclareFontFamily{U}{mathx}{\hyphenchar\font45}
  \DeclareFontShape{U}{mathx}{m}{n}{
    <5> <6> <7> <8> <9> <10>
    <10.95> <12> <14.4> <17.28> <20.74> <24.88>
    mathx10}{}
  \DeclareSymbolFont{mathx}{U}{mathx}{m}{n}
  \DeclareFontFamily{U}{mathb}{\hyphenchar\font45}
  \DeclareFontShape{U}{mathb}{m}{n}{
    <5> <6> <7> <8> <9> <10>
    <10.95> <12> <14.4> <17.28> <20.74> <24.88>
    mathb10}{}
  \DeclareSymbolFont{mathb}{U}{mathb}{m}{n}
  \DeclareMathAccent{\wbar}{0}{mathx}{"73}
  \DeclareMathSymbol{\Rsh}{\mathrel}{mathb}{"E9}
  \DeclareFontFamily{U}{MnSymbolA}{}
  \DeclareFontShape{U}{MnSymbolA}{m}{n}{
    <-6> MnSymbolA5
    <6-7> MnSymbolA6
    <7-8> MnSymbolA7
    <8-9> MnSymbolA8
    <9-10> MnSymbolA9
    <10-12> MnSymbolA10
    <12-> MnSymbolA12}{}
  \DeclareSymbolFont{MnSyA}{U}{MnSymbolA}{m}{n}
  \DeclareMathSymbol{\twoheaddownarrow}{\mathrel}{MnSyA}{27}
  \newcommand{\MSC}[1]{%
    \let\thempfn\relax
    \footnotetext[0]{2020 Mathematics Subject Classification: #1.}
  }
\newcommand{\cSet}{\mathsf{cSet}} 
\newcommand{\Grp}{\mathsf{Grp}}
\newcommand{\Set}{\mathsf{Set}}
\newcommand{\Ab}{\mathsf{Ab}}
\newcommand{\Ch}{\mathsf{Ch}}
\newcommand{\Kan}{\mathsf{Kan}}
\newcommand{\fcat}[2]{{#2}^{#1}} 
\renewcommand{\Top}{\ncat{Top}}
\newcommand{\boxcat}{\mathord{\square}} 
\newcommand{\face}[2]{\partial^{#1}_{#2}} 
\newcommand{\degen}[2]{\sigma^{#1}_{#2}} 
\newcommand{\conn}[2]{\gamma^{#1}_{#2}} 
\newcommand{\ndegen}[1][n]{\sigma^{\circ #1}} 
\newcommand{\cube}[1]{\mathord{\square^{#1}}} 
\newcommand{\obox}[2]{\mathord{\sqcap^{#1}_{#2}}} 
\newcommand{\dfobox}[1][n]{\mathord{\sqcap^{#1}_{i,\varepsilon}}} 
\newcommand{\reali}[2][]{\lvert #2 \rvert_{#1}} 
\newcommand{\lhom}{\operatorname{hom}_{L}} 
\newcommand{\rhom}{\operatorname{hom}_{R}} 
\newcommand{\Sing}{\operatorname{Sing}} 
\newcommand{\loopsp}[1][]{\Omega^{#1}} 
\newcommand{\cpsq}[1]{\langle #1 \rangle} 
\newcommand{\Z}{\mathbb{Z}} 
\newcommand{\cmap}[1][]{\partial_{#1}} 
\newcommand{\im}{\operatorname{im}} 
\newcommand{\cAb}{\mathsf{cAb}} 
\newcommand{\D}[1][]{D_{#1}} 
\newcommand{\C}{\delta} 
\newcommand{\pH}{\tilde{H}} 
\newcommand{\restr}[2]{{#1}|_{#2}} 
\newcommand{\noproof}{\hfill\qedsymbol}
\author{Daniel Carranza \and Krzysztof Kapulkin \and Andrew Tonks}
\title{The Hurewicz theorem for cubical homology}
\date{\today}
\begin{document}

  \maketitle

  \begin{abstract}
     We give an elementary proof of the Hurewicz theorem relating homotopy and homology groups of a cubical Kan complex.
     Our approach is based on the notion of a loop space of a cubical set, developed in a companion paper ``Homotopy groups of cubical sets'' by the first two authors.
     \MSC{55N10 (primary), 55P35, 55U35 (secondary)}
  \end{abstract}

\section*{Introduction}

The Hurewicz theorem is among the most basic, yet powerful tools of homotopy theory.
Given an $(n-1)$-connected pointed space $(X, x_0)$, i.e., a space such that $\pi_i(X, x_0)=\{*\}$ for $i < n$, the Hurewicz theorem gives an isomorphism $\pi_n(X, x_0) \cong \tilde{H}_n(X, x_0)$ between its first non-trivial homotopy group and the corresponding (reduced) homology group.

In this paper, we give a proof of this theorem for cubical sets, a combinatorial model for the category of topological spaces.
Cubical sets in many ways resemble simplicial sets, but while the simplices are built out of the interval by taking cones, the cubes are built out of the interval by taking products.

The notions of homotopy and homology groups of simplicial sets are, by now, well-established in the literature \cites{goerss-jardine,may:simplicial-objects}.
In particular, both of these references include a proof of the Hurewicz theorem for simplicial sets (cf.~\cite[Thm.~III.3.7]{goerss-jardine} and \cite[Thm.~13.6]{may:simplicial-objects})
Similarly, the notion of cubical homology has a long history and can be found in a number of textbooks, including \cite{massey:basic-course}, and in the context of cubical sets, e.g., in \cite{barcelo-greene-jarrah-welker}.
More recently, homotopy groups of cubical sets were introduced in \cite{carranza-kapulkin:homotopy-cubical}, although the corresponding theory was perhaps already partially known to experts.
These are the necessary ingredients allowing us to state and prove the Hurewicz theorem for cubical sets.

Here, we work with cubical sets with both (positive and negative) connections, as used in \cite{tonks:cubical-kan,doherty-kapulkin-lindsey-sattler,barcelo-greene-jarrah-welker,carranza-kapulkin:homotopy-cubical}.
Connections, introduced by Brown and Higgins \cite{brown-higgins:algebra-of-cubes}, are additional degeneracy maps making the theory better-behaved (cf.~\cite{maltsiniotis:cubes-with-connections-strict-test-category}).
However, although our cube category has both connections, our proofs apply verbatim to the case of having only the negative connection, and can easily be adapted to the case of having only the positive connection.

We build on two recent papers: \cite{barcelo-greene-jarrah-welker} and \cite{carranza-kapulkin:homotopy-cubical}.
The first of these establishes key results on the homology of cubical sets with connections, relating multiple chain complexes obtained from a single cubical set by quotienting by neither, one, or both connections.
The second develops loop spaces of cubical Kan complexes, which are more convenient to work with, and allow us to give a more elementary and straightforward proof of the Hurewicz theorem than their simplicial counterparts.

This paper is organized as follows.
In \cref{sec:cset}, we recall the requisite background on cubical sets and (cubical) Kan complexes.
\cref{sec:homotopy} reviews the notions of loop spaces and homotopy groups of cubical sets. 
After that, we review the construction of cubical homology in \cref{sec:homology}.
Finally, in \cref{sec:hurewicz}, we put it all together by defining the Hurewicz map and proving the Hurewicz theorem, that for any $(n-1)$-connected pointed Kan complex $(X, x_0)$, the Hurewicz map $\pi_n(X, x_0) \to \tilde{H}_n(X, x_0)$ is an isomorphism.

\section{Cubical sets and Kan complexes} \label{sec:cset}

In this section, we introduce cubical sets and (cubical) Kan complexes, alongside key constructions, such as the geometric product, thereon.

We begin by defining the box category $\Box$.
The objects of $\Box$ are posets of the form $[1]^n = \{ 0 \leq 1 \}^n$ and the maps are generated (inside the category of posets) under composition by the following four special classes:
\begin{itemize}
  \item \emph{faces} $\partial^n_{i,\varepsilon} \colon [1]^{n-1} \to [1]^n$ for $i = 1, \ldots , n$ and $\varepsilon = 0, 1$ given by:
  \[ \partial^n_{i,\varepsilon} (x_1, x_2, \ldots, x_{n-1}) = (x_1, x_2, \ldots, x_{i-1}, \varepsilon, x_i, \ldots, x_{n-1})\text{;}  \]
  \item \emph{degeneracies} $\sigma^n_i \colon [1]^n \to [1]^{n-1}$ for $i = 1, 2, \ldots, n$ given by:
  \[ \sigma^n_i ( x_1, x_2, \ldots, x_n) = (x_1, x_2, \ldots, x_{i-1}, x_{i+1}, \ldots, x_n)\text{;}  \]
  \item \emph{negative connections} $\gamma^n_{i,0} \colon [1]^n \to [1]^{n-1}$ for $i = 1, 2, \ldots, n-1$ given by:
  \[ \gamma^n_{i,0} (x_1, x_2, \ldots, x_n) = (x_1, x_2, \ldots, x_{i-1}, \max\{ x_i , x_{i+1}\}, x_{i+2}, \ldots, x_n) \text{.} \]
  \item \emph{positive connections} $\gamma^n_{i,1} \colon [1]^n \to [1]^{n-1}$ for $i = 1, 2, \ldots, n-1$ given by:
  \[ \gamma^n_{i,1} (x_1, x_2, \ldots, x_n) = (x_1, x_2, \ldots, x_{i-1}, \min\{ x_i , x_{i+1}\}, x_{i+2}, \ldots, x_n) \text{.} \]
\end{itemize}

These maps obey the following \emph{cubical identities}:

\[ \begin{array}{l l}
    \partial_{j, \varepsilon'} \partial_{i, \varepsilon} = \partial_{i+1, \varepsilon} \partial_{j, \varepsilon'} \quad \text{for } j \leq i; & 
    \sigma_j \partial_{i, \varepsilon} = \begin{cases}
        \partial_{i-1, \varepsilon} \sigma_j & \text{for } j < i; \\
        \id                                                       & \text{for } j = i; \\
        \partial_{i, \varepsilon} \sigma_{j-1} & \text{for } j > i;
    \end{cases} \\
    \sigma_i \sigma_j = \sigma_j \sigma_{i+1} \quad \text{for } j \leq i; &
    \gamma_{j,\varepsilon'} \gamma_{i,\varepsilon} = \begin{cases}
    \gamma_{i,\varepsilon} \gamma_{j+1,\varepsilon'} & \text{for } j > i; \\
    \gamma_{i,\varepsilon}\gamma_{i+1,\varepsilon} & \text{for } j = i, \varepsilon' = \varepsilon;
    \end{cases} \\
    \gamma_{j,\varepsilon'} \partial_{i, \varepsilon} = \begin{cases} 
        \partial_{i-1, \varepsilon} \gamma_{j,\varepsilon'}   & \text{for } j < i-1 \text{;} \\
        \id                                                         & \text{for } j = i-1, \, i, \, \varepsilon = \varepsilon' \text{;} \\
        \partial_{i, \varepsilon} \sigma_i         & \text{for } j = i-1, \, i, \, \varepsilon = 1-\varepsilon' \text{;} \\
        \partial_{i, \varepsilon} \gamma_{j-1,\varepsilon'} & \text{for } j > i;
    \end{cases} &
    \sigma_j \gamma_{i,\varepsilon} = \begin{cases}
        \gamma_{i-1,\varepsilon} \sigma_j  & \text{for } j < i \text{;} \\
        \sigma_i \sigma_i           & \text{for } j = i \text{;} \\
        \gamma_{i,\varepsilon} \sigma_{j+1} & \text{for } j > i \text{.} 
    \end{cases}
\end{array} \]
Every morphism in $\Box$ can be written in a \emph{standard form} as follows. 
\begin{theorem}[{\cite[Thm.~5.1]{grandis-mauri}}] \label{thm:normal_form}
    Every map in the category $\Box$ can be factored uniquely as a composite
    \[ (\partial_{c_1, \varepsilon'_1} \ldots \partial_{c_r, \varepsilon'_r})
       (\gamma_{b_1,\varepsilon_1} \ldots \gamma_{b_q,\varepsilon_q})
       (\sigma_{a_1} \ldots \sigma_{a_p})\text{,} \]
    where $1 \leq a_1 < \ldots < a_p$, $1 \leq b_1 \leq \ldots \leq b_q$, $b_i < b_{i+1}$ if $\varepsilon_{i} = \varepsilon_{i+1}$, and $c_1 > \ldots > c_r \geq 1$.   \qed
  \end{theorem}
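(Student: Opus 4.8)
The plan is to prove existence and uniqueness separately, and to reduce uniqueness to the fact that the forgetful functor $\Box \to \mathsf{Poset}$ is faithful. Since the morphisms of $\Box$ are, by construction, genuine monotone maps $[1]^m \to [1]^n$, two words in the generating faces, degeneracies, and connections represent the same morphism exactly when they induce the same function; it therefore suffices to produce \emph{some} factorization of the stated shape and then to show that the indices and signs can be read back off the underlying function.

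For existence I would read each cubical identity left-to-right as a rewriting rule and use them to \emph{straighten} an arbitrary word. The relations for $\sigma_j \partial_{i,\varepsilon}$ and $\gamma_{j,\varepsilon'}\partial_{i,\varepsilon}$ let every face be moved to the left of any degeneracy or connection lying to its right (with the appropriate index shift), or else absorbed via $\sigma_i\partial_{i,\varepsilon} = \id$ or $\gamma_{j,\varepsilon'}\partial_{i,\varepsilon}=\id$; the relation for $\sigma_j\gamma_{i,\varepsilon}$ lets every degeneracy be moved to the right of any connection to its right, or replaced by $\sigma_i\sigma_i$. After these moves the word has the shape $(\text{faces})(\text{connections})(\text{degeneracies})$. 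Within each block the face--face, connection--connection, and degeneracy--degeneracy commutation relations act as adjacent-transposition rules that sort the indices into the required orders $c_1 > \dots > c_r$, $b_1 \le \dots \le b_q$, and $a_1 < \dots < a_p$, while the rule $\gamma_{i,\varepsilon}\gamma_{i,\varepsilon} = \gamma_{i,\varepsilon}\gamma_{i+1,\varepsilon}$ eliminates the forbidden equal-index equal-sign repetitions, enforcing that $b_i < b_{i+1}$ whenever $\varepsilon_i = \varepsilon_{i+1}$. Termination of this procedure is the one point in the existence argument needing care: it follows from a well-founded lexicographic measure weighing, in order, word length, the number of connections, the number of generators lying outside their target block, the number of forbidden adjacent connection pairs, and the number of in-block index inversions, with each rewriting step strictly decreasing this measure.

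For uniqueness I would recover the data from the underlying monotone map $f\colon[1]^m\to[1]^n$. Factor $f = i\circ e$ as a surjection $e\colon[1]^m\to[1]^k$ followed by an injection $i\colon[1]^k\to[1]^n$, the usual (epi, mono) factorization in $\mathsf{Poset}$. The injection $i$ is the inclusion of a subcube, i.e.\ a choice of $r = n-k$ coordinate positions together with a value in $\{0,1\}$ at each; ordering the positions decreasingly as $c_1>\dots>c_r$ makes this data correspond bijectively to the face block $\partial_{c_1,\varepsilon'_1}\cdots\partial_{c_r,\varepsilon'_r}$, exactly as in the simplicial case. It then remains to show that the surjection $e$ determines the degeneracy indices $a_i$ and the connection data $(b_i,\varepsilon_i)$ uniquely: one checks that such an $e$ expresses each output coordinate as a $\max$/$\min$ of a consecutive run of input coordinates while deleting the remaining inputs, and that the ordering conventions pin down a single such presentation.

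The main obstacle is precisely this last point, the uniqueness of the degeneracy--connection (epi) block. Unlike the simplicial epi--mono factorization, where the surjective part is a composite of codegeneracies with a single, easily inverted combinatorial meaning, here the connections mix adjacent coordinates through $\max$ and $\min$, so one must verify both that every monotone surjection arising in $\Box$ admits the stated presentation and that the condition $b_i<b_{i+1}$ when $\varepsilon_i=\varepsilon_{i+1}$ is exactly what rigidifies it. I expect the cleanest route to be an explicit inverse construction, reading the connection data off the merging pattern of $e$ coordinate by coordinate and the degeneracy data off the deleted coordinates, then checking that this inverts the existence construction. An alternative that avoids the function-theoretic analysis is to prove the rewriting system of the existence step confluent by enumerating its critical pairs and invoking Newman's lemma; this is more mechanical but considerably more tedious.
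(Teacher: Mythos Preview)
The paper does not prove this theorem: it is stated with a citation to Grandis--Mauri and closed immediately with a \qed, so there is no argument in the paper itself to compare your proposal against.

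Your outline is a sound strategy for a normal form result of this kind: existence by rewriting along the cubical identities until no rule applies, and uniqueness via the faithfulness of the inclusion $\Box \hookrightarrow \mathsf{Poset}$, recovering the face, connection, and degeneracy data from the underlying monotone map. You have correctly isolated the one genuinely delicate point, namely uniqueness of the surjective block $(\gamma_{b_1,\varepsilon_1}\cdots\gamma_{b_q,\varepsilon_q})(\sigma_{a_1}\cdots\sigma_{a_p})$, where one must show that the ordering constraints on the $b_i$'s and $\varepsilon_i$'s rigidify the presentation of the epimorphism. Either of the routes you sketch for this --- reading the indices off the merging pattern of the surjection coordinate by coordinate, or verifying local confluence of the rewriting system and invoking Newman's lemma --- would close the gap; the cited reference proceeds along essentially these lines.
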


\begin{definition}
 The category $\cSet$ of \emph{cubical sets} is the functor category $\fcat{\boxcat^\op}{\Set}$.
We refer to objects and morphisms of $\cSet$ as \emph{cubical sets} and \emph{cubical maps}.

The category $\cAb$ of \emph{cubical abelian groups} is the functor category $\fcat{\boxcat^\op}{\Ab}$.
\end{definition}

Given a cubical set $X$, we write $X_n$ for the value of $X$ on the object $[1]^n$ and write cubical operators on the right, e.g.~given an $n$-cube $x \in X_n$ of $X$, we write $x\face{}{1,0}$ for the $\face{}{1,0}$-face of $x$.
Likewise, for a cubical abelian group $A$, we denote the group $A([1]^n)$ by $A_n$ for $n \geq 0$ and write cubical operators on the right. 

\begin{definition}
    Let $n \geq 0$.
    \begin{itemize}
        \item The \emph{combinatorial $n$-cube} $\cube{n}$ is the representable functor $\boxcat(-, [1]^n) \from \boxcat^\op \to \Set$.
        \item The \emph{boundary of the $n$-cube} $\bd \cube{n}$ is the subobject of $\cube{n}$ defined by
        \[ \bd \cube{n} := \bigcup\limits_{\substack{j=1,\dots,n \\ \eta = 0, 1}} \im \face{}{j,\eta}. \]
        \item Given $i = 1, \dots, n$ and $\varepsilon = 0, 1$, the \emph{$(i,\varepsilon)$-open box} $\dfobox$ is the subobject of $\bd \cube{n}$ defined by
        \[ \dfobox := \bigcup\limits_{(j,\eta) \neq (i,\varepsilon)} \im \face{}{j,\eta}. \]
    \end{itemize}
\end{definition}

A large class of examples of cubical sets comes from topological spaces via the construction of a (cubical) singular complex.

\begin{example}
    Define a functor $\boxcat \to \Top$ from the box category to the category of topological spaces which sends $[1]^n$ to $[0, 1]^n$ where $[0, 1]$ is the unit interval.
    Left Kan extension along the Yoneda embedding gives the \emph{geometric realization} functor $\reali{\uvar} \from \cSet \to \Top$.
    \[ \begin{tikzcd}[column sep = large]
        \boxcat \ar[r, "{[1]^n \mapsto [0, 1]^n}"] \ar[d] & \Top \\
        \cSet \ar[ur, "\reali{\uvar}"']
    \end{tikzcd} \]
    This functor is left adjoint to the \emph{cubical singular complex} functor $\Sing{} \from \Top \to \cSet$ defined by
    \[ (\Sing{S})_n := \Top ([0, 1]^n, S). \]
\end{example}

The cubical sets arising from topological spaces satisfy an additional lifting property, making them particularly convenient for the purposes of homotopy theory.

\begin{definition}
    A cubical set $K$ is a \emph{Kan complex} if for any map $\dfobox \to K$, there exists $\cube{n} \to K$ such that the diagram
    \[ \begin{tikzcd}
        \dfobox \ar[r] \ar[d, hook] & K \\
        \cube{n} \ar[ur, dotted]
    \end{tikzcd} \]
    commutes.
\end{definition}
We write $\Kan$ for the full subcategory of $\cSet$ consisting of Kan complexes.
\begin{example}
    For any $S \in \Top$, we have that $\Sing{S}$ is a Kan complex.
    By the geometric realization and cubical singular complex adjunction, a map $\dfobox \to \Sing{S}$ corresponds to a map $\reali{\dfobox} \to S$.
    The inclusion $\reali{\dfobox} \ito \reali{\cube{n}}$ has a retraction in $\Top$.
    Pre-composing with this retraction gives a map $\reali{\cube{n}} \to S$ which restricts to the open box map $\reali{\dfobox} \to S$.
    \[ \begin{tikzcd}
        \reali{\dfobox} \ar[d, hook] \ar[r] & S \\
        \reali{\cube{n}} \ar[u, bend right]
    \end{tikzcd} \]
    This gives a suitable lift $\cube{n} \to \Sing{S}$.
\end{example}
\begin{example}[{\cite[Thm.~2.1]{tonks:cubical-kan}}]
    Every cubical abelian group, when regarded as a cubical set, is a Kan complex.
\end{example}

Define a functor $\gprod \from \boxcat \times \boxcat \to \boxcat$ by mapping $([1]^m, [1]^n)$ to $[1]^{m+n}$.
Postcomposing with the Yoneda embedding and left Kan extending gives a monoidal product on cubical sets.
\[ \begin{tikzcd}
    \boxcat \times \boxcat \ar[r] \ar[d] & \boxcat \ar[r] & \cSet \\
    \cSet \times \cSet \ar[urr, "\gprod"']
\end{tikzcd} \]
This is the \emph{geometric product} of cubical sets.
This product is biclosed. For a cubical set $X$, we write $\lhom(X, \uvar) \from \cSet \to \cSet$ and $\rhom (X, \uvar) \from \cSet \to \cSet$ for the right adjoints to the functors $\uvar \gprod X$ and $X \gprod \uvar$, respectively.

The following result gives an explicit description of cubes in the geometric product.
\begin{proposition}[{\cite[Prop.~1.24]{doherty-kapulkin-lindsey-sattler}}] \label{thm:gprod_cube}
    Let $X, Y$ be cubical sets.
    \begin{enumerate}
        \item For $k \geq 0$, the $k$-cubes of $X \gprod Y$ consists of all pairs $(x \in X_m, y \in Y_n)$ such that $m + n = k$, subject to the identification $(x\degen{}{m+1}, y) = (x, y\degen{}{1})$.
        \item For $x \in X_m$ and $y \in Y_n$, the faces, degeneracies, and connections of the $(m+n)$-cube $(x, y)$ are computed by
        \begin{align*}
            (x, y)\face{}{i,\varepsilon} &= \begin{cases}
                (x\face{}{i,\varepsilon}, y) & 1 \leq i \leq m \\
                (x, y\face{}{i-m, \varepsilon}) & m+1 \leq i \leq m+n;
            \end{cases} \\
            (x, y)\degen{}{i} &= \begin{cases}
                (x\degen{}{i}, y) & 1 \leq i \leq m+1 \\
                (x, y\degen{}{i-m}) & m+1 \leq i \leq m+n;
            \end{cases} \\
            (x, y)\conn{}{i,\varepsilon} &= \begin{cases}
                (x\conn{}{i,\varepsilon}, y) & 1 \leq i \leq m \\
                (x, y\conn{}{i-m,\varepsilon}) & m+1 \leq i \leq n.
            \end{cases}
        \end{align*}
    \end{enumerate} \noproof
\end{proposition}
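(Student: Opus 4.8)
My plan is to compute $(X \gprod Y)_k$ directly from the defining left Kan extension, reduce every cube to a canonical pair $(x,y)$, and then read off the operator formulas by re-expressing each cube operator through the geometric product. Since $\gprod \from \cSet \times \cSet \to \cSet$ is the left Kan extension of $([1]^m, [1]^n) \mapsto \cube{m+n}$ along $([1]^m, [1]^n) \mapsto (\cube{m}, \cube{n})$, the pointwise coend formula, together with the Yoneda identification $\cSet(\cube{m}, X) \times \cSet(\cube{n}, Y) \iso X_m \times Y_n$, gives
\[ (X \gprod Y)_k \iso \coend^{([1]^m, [1]^n)} X_m \times Y_n \times \boxcat\bigl([1]^k, [1]^{m+n}\bigr). \]
Thus $(X \gprod Y)_k$ is the set of equivalence classes of triples $(x, y, \phi)$ with $x \in X_m$, $y \in Y_n$, and $\phi \from [1]^k \to [1]^{m+n}$, under the relation generated by $(x\alpha, y\beta, \phi) \sim \bigl(x, y, (\alpha \gprod \beta)\phi\bigr)$ for morphisms $\alpha, \beta$ of $\boxcat$, which transports a pair of cube operators from the pair $(x,y)$ onto $\phi$.

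The heart of the argument is a \emph{factorization lemma}: every $\phi \from [1]^k \to [1]^{m+n}$ in $\boxcat$ can be written as $\alpha \gprod \beta$ for some $\alpha \from [1]^{k_1} \to [1]^m$ and $\beta \from [1]^{k_2} \to [1]^n$ with $k_1 + k_2 = k$. I would prove this from the normal form of \cref{thm:normal_form}: writing $\phi$ as faces after connections after degeneracies, I track the cut of the target between coordinates $m$ and $m+1$ backwards through the factorization. The faces merely insert constant coordinates, each lying on a definite side of the cut; each connection combines two adjacent coordinates into a single output coordinate, which again lies on a definite side, so \emph{no connection straddles the cut}; and the degeneracies drop coordinates. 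Restricting $\phi$ to the two resulting halves of $[1]^k$ produces $\alpha$ and $\beta$. The only freedom in this construction is the side to which a degenerate (dropped) source coordinate adjacent to the cut is assigned, and this is exactly the ambiguity that will yield the identification $(x\degen{}{m+1}, y) = (x, y\degen{}{1})$. Establishing this lemma cleanly — keeping the indices straight through the normal form and verifying that connections never straddle — is where I expect the main difficulty to lie.

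Granting the lemma, the assignment $(x,y) \mapsto [(x, y, \id)]$ on $\coprod_{m+n=k} X_m \times Y_n$ is surjective: splitting $\phi = \alpha \gprod \beta$ gives $(x, y, \phi) \sim (x\alpha, y\beta, \id)$ with $x\alpha \in X_{k_1}$, $y\beta \in Y_{k_2}$, and $k_1 + k_2 = k$. To pin down the fibers I would define the inverse $(X \gprod Y)_k \to \bigl(\coprod_{m+n=k} X_m \times Y_n\bigr)/{\sim}$ sending $[(x,y,\phi)]$ to the class of $(x\alpha, y\beta)$ for any chosen splitting; checking this is well defined forces precisely the relation $(x\degen{}{m+1}, y) \sim (x, y\degen{}{1})$ coming from the splitting ambiguity, and no other relation, and exhibits a two-sided inverse. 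This establishes part~(1).

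For part~(2) I represent the $(m+n)$-cube $(x,y)$ by $(x, y, \id)$ and apply a cube operator $\theta$; on the coend this precomposes $\phi$, producing the triple $(x, y, \theta)$, and splitting $\theta = \alpha \gprod \beta$ reads off the answer against the monoidal rule $[1]^a \gprod [1]^b = [1]^{a+b}$. A face $\face{}{i,\varepsilon}$ splits as $\face{}{i,\varepsilon} \gprod \id$ for $1 \le i \le m$ and as $\id \gprod \face{}{i-m,\varepsilon}$ for $m+1 \le i \le m+n$, giving the two face cases; connections and degeneracies split in the same fashion, the boundary degeneracy $\degen{}{m+1}$ admitting both splittings $\degen{}{m+1} \gprod \id$ and $\id \gprod \degen{}{1}$, whose agreement is exactly the identification from part~(1). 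Each such splitting is an immediate coordinate-wise check, so part~(2) follows without further obstacle.
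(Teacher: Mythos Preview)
The paper does not prove this proposition: it is quoted from \cite{doherty-kapulkin-lindsey-sattler} and closed with \verb|\noproof|, so there is no argument in the paper to compare yours against.

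That said, your approach is the natural one and is essentially how the cited reference proceeds. The coend description is correct, and the factorization lemma---that every $\phi \from [1]^k \to [1]^{m+n}$ in $\boxcat$ splits as $\alpha \gprod \beta$---is exactly the combinatorial core. Your argument via the normal form is right: tracking the cut between coordinates $m$ and $m{+}1$ backwards, faces and connections each determine the cut uniquely (since every output coordinate of a composite of connections depends on a contiguous block of input coordinates), while degeneracies at the boundary introduce precisely the ambiguity $(x\degen{}{m+1}, y) \sim (x, y\degen{}{1})$. The one place to be careful is the claim that this is the \emph{only} ambiguity: you need that any two splittings $\alpha_1 \gprod \beta_1 = \alpha_2 \gprod \beta_2$ differ by a sequence of boundary-degeneracy moves, which you can extract from the uniqueness of the normal form in \cref{thm:normal_form} by comparing where the cut lands in each factor's normal form. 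Once that is nailed down, your inverse map is well defined and part~(1) follows; part~(2) is then the routine check you describe.
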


Using the geometric product, we may define the notion of homotopy for maps of cubical sets.
\begin{definition}
    Given cubical maps $f, g \from X \to Y$, a \emph{homotopy} from $f$ to $g$ is a map $h \from X \gprod \cube{1} \to Y$ such that the diagram
    \[ \begin{tikzcd}
        X \gprod \cube{0} \ar[d, left, "\face{}{1,0}"'] \ar[dr, "f"] & \\
        X \gprod \cube{1} \ar[r, "h"] & Y \\
        X \gprod \cube{0} \ar[u, left, "\face{}{1,1}"] \ar[ur, "g"'] & 
    \end{tikzcd} \]
    commutes.
\end{definition}

In order to define homotopy and reduced homology groups, we work with pointed cubical sets and based maps between them.
\begin{definition} 
    \begin{enumerate} 
    \item A \emph{pointed cubical set} is an object $(X,x)$ in the slice category $\cube{0} \slice \cSet$ under $\cube{0}$, where $x$ denotes the map $\cube{0} \to X$.
    \item A \emph{based map} is a morphism $(X,x) \to (Y,y)$ in $\cube{0} \slice \cSet$.
    \item A \emph{based homotopy} between two based maps $f, g \from (X,x) \to (Y,y)$ is a homotopy $h \from X \gprod \cube{1} \to Y$ between the underlying maps $f, g \from X \to Y$ such that the restriction of $h$ to the cubical subset $\{ x \} \gprod \cube{1} \subseteq X \gprod \cube{1}$ is constant at the point $y \in Y$.
    That is, there is a factorization:
    \[ \begin{tikzcd}[column sep = large]
        \{ x \} \gprod \cube{1} \ar[r, dotted, "\restr{h}{\{ x \} \gprod \cube{1}}"] \ar[d, hook, "x \gprod \cube{1}"'] & \{ y \} \ar[d, hook, "y"] \\
        X \gprod \cube{1} \ar[r, "h"] & Y
    \end{tikzcd} \] 
    \end{enumerate}
\end{definition}
We write $\cSet_*$ for the category of pointed cubical sets.
We occasionally write a pointed cubical set $(X,x)$ as simply $X$, omitting the base point to aid readability.

\section{Homotopy groups of Kan complexes} \label{sec:homotopy}

In this section, we briefly summarize the relevant definitions of loop spaces and homotopy groups of cubical sets from \cite{carranza-kapulkin:homotopy-cubical}.

We begin by recalling the definition of connected components of a Kan complex.
\begin{definition}
    The \emph{connected components} functor $\pi_0 \from \cSet_* \to \Set_*$ is the functor which evaluates the colimit of a pointed cubical set $X$, regarded as a diagram $\boxcat^\op \to \Set_*$. 
\end{definition}
If $X$ is a pointed Kan complex then $\pi_0 X$ is exactly the (pointed) set of (unbased) homotopy classes of maps $\cube{0} \to X$.
In analogy with spaces, the $n$-th homotopy group of a Kan complex is the set of connected components of its $n$-th loop space, which we move towards defining.

\begin{notation}
Given a cubical set $X$, a 0-cube $x \in X_0$, and $n \geq 0$, we write $x\ndegen$ for the $n$-cube $x\degen{1}{1}\degen{2}{1}\dots\degen{n}{1}$, i.e., the $n$-cube that is degenerate at the $0$-cube $x$.
\end{notation}

\begin{definition}
    For a pointed Kan complex $(X,x)$, the \emph{loop space} $\loopsp(X,x)$ of $(X, x)$ is the pullback
    \[ \begin{tikzcd}
        \loopsp(X,x) \ar[r] \ar[d] \ar[rd, phantom, "\ulcorner" very near start] & \rhom(\cube{1}, X) \ar[d, "{(\face{*}{1,0}, \face{*}{1,1})}"] \\
        \cube{0} \ar[r, "{(x, x)}"] & X \times X
    \end{tikzcd}  \]
    with distinguished basepoint $x\degen{}{1} \in \loopsp(X,x)$.
\end{definition}
Explicitly, an $n$-cube of $\loopsp(X,x)$ is an $(n+1)$-cube of $X$ whose $\face{}{1,0}$- and $\face{}{1,1}$-faces are $x\ndegen$.
If $X$ is a Kan complex then so is $\loopsp X$ \cite[Cor.~3.4]{carranza-kapulkin:homotopy-cubical}.

\begin{definition} \label{def:concat}
    Let $(X,x)$ be a pointed Kan complex.
    Given $u, v \in (\loopsp (X,x))_0$,
    \begin{enumerate}
        \item a \emph{concatenation square} for $u$ and $v$ is a filler $\cube{2} \to X$ for the map $f \from \obox{2}{2,0} \to X$ defined by
        \[ \begin{array}{l l}
            f\face{}{1,0} := u & f\face{}{1,1} := x\degen{}{1} \\
            & f\face{}{2,1} := v
        \end{array} \qquad \begin{tikzcd}
            x \ar[d, "u"'] & x \ar[d, equal] \\
            x \ar[r, "v"] & x
        \end{tikzcd} \]
        \item a \emph{concatenation} of $u$ and $v$ is a 0-cube of $\loopsp(X,x)$ which is the $\face{}{2,0}$-face of a concatenation square for $u$ and $v$.
        \[ \begin{tikzcd}
            x \ar[d, "u"'] \ar[r, dotted] & x \ar[d, equal] \\
            x \ar[r, "v"] & x
        \end{tikzcd} \]
    \end{enumerate}
\end{definition}
\begin{theorem}[{\cite[Thm.~3.11]{carranza-kapulkin:homotopy-cubical}}]
    Concatenation induces a well-defined group structure 
    \[ \pi_0 \loopsp X \times \pi_0 \loopsp X \to \pi_0 \loopsp X \] 
    on connected components of $\loopsp X$. \noproof
\end{theorem}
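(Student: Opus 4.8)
The plan is to establish, in turn, four things: that concatenations exist, that the homotopy class of a concatenation depends on neither the chosen filler nor the chosen representatives of $[u], [v] \in \pi_0 \loopsp X$, and finally that the induced binary operation satisfies the group axioms. Throughout I identify a $0$-cube of $\loopsp X$ with a $1$-cube $u$ of $X$ satisfying $u\face{}{1,0} = u\face{}{1,1} = x$, and a $1$-cube of $\loopsp X$ with a $2$-cube $H$ of $X$ satisfying $H\face{}{1,0} = H\face{}{1,1} = x\degen{}{1}$; since $\loopsp X$ is a Kan complex, two $0$-cubes of $\loopsp X$ represent the same class in $\pi_0 \loopsp X$ exactly when they arise as the two faces $H\face{}{2,0}$ and $H\face{}{2,1}$ of such an $H$ (reflexivity, symmetry, and transitivity of this relation are themselves small open-box fillings). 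Existence of concatenations is then immediate: the three prescribed faces $u$, $x\degen{}{1}$, $v$ all carry the basepoint $x$ at every vertex, so by the cubical identities of \cref{sec:cset} they glue to a single map $\obox{2}{2,0} \to X$, to which the Kan condition supplies a filler.

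For independence of the filler, suppose $C, C'$ are two concatenation squares for the same pair $u, v$, with outputs $w = C\face{}{2,0}$ and $w' = C'\face{}{2,0}$ (each of which one checks, via the cubical identities, is again a loop). I would assemble a $3$-dimensional open box: place $C$ and $C'$ on the two faces in one direction, and place the appropriate degeneracies of $u$, $x\degen{}{1}$, and $v$ — degenerate along that same direction — on the remaining prescribed faces. One verifies, again using the cubical identities, that these five $2$-cubes agree along their shared edges, hence define a map $\obox{3}{j,\eta} \to X$; filling it and reading off the remaining face yields a $2$-cube $H$ with $H\face{}{1,0} = H\face{}{1,1} = x\degen{}{1}$ and with $H\face{}{2,0}$, $H\face{}{2,1}$ equal to $w$ and $w'$, that is, a path in $\loopsp X$ from $w$ to $w'$. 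Independence of the chosen representatives is entirely analogous: given homotopies $u \we u'$ and $v \we v'$ in $\loopsp X$ together with concatenation squares for $(u,v)$ and $(u',v')$, one fills a $3$-dimensional open box built from these data and extracts a homotopy between the two outputs.

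The group axioms follow the same pattern. For associativity I would choose concatenation squares realizing $u\cdot v$, $(u\cdot v)\cdot w$, $v\cdot w$, and $u\cdot(v\cdot w)$, arrange them as faces of a single $3$-cube together with suitable degenerate cubes, and fill to obtain a homotopy $(u\cdot v)\cdot w \we u\cdot(v\cdot w)$. The constant loop $x\degen{}{1}$ is a two-sided unit: taking $v = x\degen{}{1}$, a connection on $u$ furnishes a canonical concatenation square whose output is $u$ itself, and symmetrically on the other side, so that $[u]\cdot[x\degen{}{1}] = [u] = [x\degen{}{1}]\cdot[u]$. For inverses, given $u$ I would apply the Kan condition to an open box two of whose faces are $u$, producing a loop $u^{-1}$ together with a concatenation square that exhibits $u \cdot u^{-1}$ as homotopic to the constant loop, and dually for $u^{-1}\cdot u$.

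The main obstacle is not any single step but the technical core they all share, of which associativity is the most delicate instance: each reduces to exhibiting a $3$-dimensional open box with precisely prescribed faces and verifying, through the face--face, face--degeneracy, and face--connection identities of \cref{sec:cset}, that those faces are mutually compatible along their common $2$-faces, so that the Kan condition applies. Keeping this boundary bookkeeping correct — in particular recognizing the auxiliary cubes as exactly the degeneracies and connections they are forced to be — is where the genuine work lies, and it is precisely the presence of connections that makes the canonical unit and inverse fillers available.
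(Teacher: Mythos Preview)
The paper does not prove this statement: it is quoted verbatim from \cite[Thm.~3.11]{carranza-kapulkin:homotopy-cubical} and marked with \verb|\noproof|, so there is nothing on the paper's side to compare against beyond the bare citation.

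Your sketch is a correct outline of the standard argument and matches, in broad strokes, how the cited reference proceeds. The individual ingredients you name are right: existence is a single $\obox{2}{2,0}$-filling; well-definedness (both independence of filler and of representatives) and associativity each reduce to assembling a $3$-dimensional open box and filling; the negative connection $u\conn{}{1,0}$ is precisely the concatenation square witnessing $[u]\cdot[x\degen{}{1}]=[u]$ (its four faces are $u$, $x\degen{}{1}$, $u$, $x\degen{}{1}$ in the order $\face{}{1,0},\face{}{1,1},\face{}{2,0},\face{}{2,1}$); the left unit uses the degeneracy $u\degen{}{1}$; and inverses come from filling a box with two faces equal to $u$ and the target face prescribed as $x\degen{}{1}$. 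Your caveat at the end is well placed: none of these $3$-cube constructions is conceptually hard, but each requires checking a handful of edge-compatibility equations via the identities in \cref{sec:cset}, and associativity in particular needs two intermediate fillings before the final one. If you were writing this up in full you would want to fix explicit indices for every open box rather than leaving them implicit, but as a plan the argument is sound.
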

\begin{definition}
    The \emph{fundamental group} $\pi_1 (X,x)$ of a pointed Kan complex $(X,x)$ is the group $\pi_0 \loopsp (X,x)$ under concatenation.
\end{definition}
\begin{remark} \label{rmk:conc}
    As stated in \cite[Rmk.~3.12]{carranza-kapulkin:homotopy-cubical}, this definition of the fundamental group relies on negative connections.
    This definition may be modified to work with positive connections using a different definition of concatenation squares where either the $\face{}{1,0}$- or $\face{}{1,1}$-face is $x\degen{}{1}$.
\end{remark}
Higher homotopy groups are defined by iterating the loop space construction.
\begin{definition}
    For a pointed Kan complex $(X,x)$ and $n \geq 0$, 
    \begin{enumerate}
        \item the \emph{$n$-th loop space} $\loopsp[n] (X,x)$ of $(X,x)$ is defined inductively by
        \[ \loopsp[n] (X,x) := \begin{cases}
            (X,x) & n = 0 \\
            \loopsp(\loopsp[n-1] (X, x), x\ndegen[n-1]) & n > 0.
        \end{cases} \]
        \item the \emph{$n$-th homotopy group} $\pi_n(X,x)$ of $(X,x)$ is defined by
        \[ \pi_n(X,x) := \pi_0 \loopsp[n](X,x\ndegen). \]
    \end{enumerate}
\end{definition}
Note that there is a notion of concatenation (which is well-defined up to homotopy) for $n$-cubes whose faces are not all degenerate (cf.~\cite[pg.~27-28]{carranza-kapulkin:homotopy-cubical}).
We record one such notion, horizontal concatenation of squares, for later use.
\begin{definition}[{\cite[pg.~28]{carranza-kapulkin:homotopy-cubical}}]
    Let $X$ be a cubical set and $u, v \in X_2$ be such that $u\face{}{1,1} = v\face{}{1,0}$.
    \begin{enumerate}
    \item A \emph{horizontal concatenation cube} for $u$ and $v$ is a filler $\cube{3} \to X$ for the map $f \from \obox{3}{2,0} \to X$ defined by
    \[ \begin{array}{l l}
        f\face{}{1,0} := u & f\face{}{1,1} := v\face{}{1,1}\degen{}{1} \\
        & f\face{}{2,1} := v \\
        f\face{}{3,0} = \cpsq{u\face{}{2,0}, v\face{}{2,0}} & f\face{}{3,1} = \cpsq{u\face{}{2,1}, v\face{}{2,1}}
    \end{array} \qquad \begin{tikzcd}
        \cdot \ar[rr] \ar[rd] \ar[ddd] \ar[rdddd, phantom, "u" description] && \cdot \ar[rd, equal] \ar[ddd] & \\
        & \cdot \ar[rr, crossing over] \ar[rrddd, "v" description, phantom, xshift=-3ex, yshift=1ex] && \cdot \ar[ddd] \\
        \\
        \cdot \ar[rr] \ar[rd] && \cdot \ar[rd, equal] & \\
        & \cdot \ar[rr] \ar[from=uuu, crossing over] && \cdot
    \end{tikzcd} \]
    where $\cpsq{u\face{}{2,0}$, $v\face{}{2,1}}$ and $\cpsq{u\face{}{2,1}$, $v\face{}{2,1}}$ are concatenation squares for the pairs $u\face{}{2,0}, v\face{}{2,0}$ and $u\face{}{2,1}, v\face{}{2,1}$, respectively (note that concatenation squares as in \cref{def:concat} are well-defined for these pairs despite not being 0-cubes of $\loopsp X$).
    \item A \emph{horizontal concatenation} for $u$ and $v$ is a 2-cube of $X$ which is the $\face{}{2,0}$-face of a horizontal concatenation cube for $u$ and $v$.
    \end{enumerate}
\end{definition}

The notion of homotopy groups gives a notion of $n$-connected maps and $n$-connected Kan complexes.
\begin{definition}
    Let $n \geq 1$.
    \begin{enumerate}
        \item A pointed map $f \from X \to Y$ between pointed Kan complexes is \emph{$n$-connected} if the map $\pi_k (f) \from \pi_k (X) \to \pi_k (Y)$ is an isomorphism for $0 \leq k \leq n$, and an epimorphism for $k = n+1$.
        \item A pointed Kan complex $X$ is \emph{$n$-connected} if the unique map $X \to \cube{0}$ is $n$-connected.
    \end{enumerate}
\end{definition}

Thus a pointed Kan complex $X$ is $n$-connected exactly when $\pi_k(X)$ is trivial for $k \leq n$.
In particular, a Kan complex is $0$-connected if and only if it is connected.

\section{Cubical homology} \label{sec:homology}

We now recall the construction and main properties of cubical homology, with a special emphasis on the results presented in \cite{barcelo-greene-jarrah-welker}.
We keep our presentation self-contained, starting with the definition of the homology of a (bounded) chain complex with integer coefficients.

\begin{definition}
    A (bounded) \emph{chain complex} $C_\bullet$ (over $\Z$) consists of a set $\{ C_n \mid n \in \N \}$ of abelian groups and, for $n \geq 1$, a group homomorphism $\cmap[n] \from C_n \to C_{n-1}$ such that $\cmap[n-1]\cmap[n] = 0$.
    
    We write $\Ch$ for the category of chain complexes.
\end{definition}

As alluded to in the above definition, throughout this and subsequent section, we will be working with homology with integer coefficients.
With that in mind, we will be omitting the coefficient group from our notation for homology, writing simply $H_n X$ for the $n$-th homology group of $X$ or $H_* X$ for the corresponding graded abelian group.

\begin{definition}
    The \emph{homology} functor $H_* \from \Ch \to \fcat{\N}{\Ab}$ is defined by
    \[ H_n C := \begin{cases}
        C_0 / \im \cmap[1] & n = 0 \\
        \ker \cmap[n] / \im \cmap[n+1] & n \geq 1.
    \end{cases} \]
\end{definition}

We now move to define chain complexes associated to cubical abelian groups via the Moore complex.
This will be done by first defining a slightly bigger chain complex and then quotienting by its degenerate subcomplex.

Given a cubical abelian group $A$ and $n \geq 1$, define the \emph{alternating face map} $\cmap[n] \from A_n \to A_{n-1}$ by
\[ \cmap[n](x) = \sum\limits_{\substack{i=1, \dots, n \\ \varepsilon = 0, 1}} (-1)^{i + \varepsilon} x \face{}{i,\varepsilon}.  \]
\begin{proposition} \label{thm:cab_ch}
    For a cubical abelian group $A$, the set $A_\bullet := \{ A_n \mid n \in \N \}$ with the alternating face maps $\cmap[n] \from A_n \to A_{n-1}$ forms a chain complex.
\end{proposition}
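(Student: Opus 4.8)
The claim that each $\cmap[n]$ is a group homomorphism is immediate and I would dispatch it in one line: in a cubical abelian group every face operator $\face{}{i,\varepsilon} \from A_n \to A_{n-1}$ is a homomorphism of abelian groups, and an integer-linear combination of homomorphisms is again a homomorphism. The real content is the chain-complex condition $\cmap[n-1]\cmap[n] = 0$ (for $n \geq 2$), which I would prove by a sign-reversing pairing of the summands of the expanded double sum.

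First I would expand, for $x \in A_n$,
\[ \cmap[n-1]\cmap[n](x) = \sum_{\substack{i=1,\dots,n\\ j=1,\dots,n-1\\ \varepsilon,\varepsilon'=0,1}} (-1)^{i+\varepsilon+j+\varepsilon'}\, x\face{}{i,\varepsilon}\face{}{j,\varepsilon'}. \]
The only cubical identity required is the face--face relation, which for operators acting on the right becomes
\[ x\face{}{a,\delta}\face{}{b,\delta'} = x\face{}{b+1,\delta'}\face{}{a,\delta} \qquad (a \leq b). \]
This is just the relation $\partial_{j,\varepsilon'}\partial_{i,\varepsilon} = \partial_{i+1,\varepsilon}\partial_{j,\varepsilon'}$ for $j \leq i$ from the list of cubical identities, transcribed into the right-action convention. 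I would then split the sum into the part $S_{\leq}$ with $i \leq j$ and the part $S_{>}$ with $i > j$. Applying the face--face relation to a term of $S_{\leq}$ (taking $a = i$, $b = j$) rewrites $x\face{}{i,\varepsilon}\face{}{j,\varepsilon'}$ as $x\face{}{j+1,\varepsilon'}\face{}{i,\varepsilon}$, whose leading index $j+1$ now strictly exceeds its trailing index $i$, so the rewritten cube is precisely one of the summands indexing $S_{>}$.

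The crux is then a bookkeeping check: the assignment $(i,j,\varepsilon,\varepsilon') \mapsto (j+1,\,i,\,\varepsilon',\,\varepsilon)$ is a bijection from the index set of $S_{\leq}$ onto that of $S_{>}$, with inverse $(i',j',\varepsilon'',\varepsilon''') \mapsto (j',\,i'-1,\,\varepsilon''',\,\varepsilon'')$; the range constraints $1 \le i \le j \le n-1$ and $1 \le j' < i' \le n$ match up exactly (note $j' < i' \le n$ forces $j' \le n-1$ and $i' \ge 2$ automatically). Under this pairing the two underlying cubes agree by the face--face relation, while the signs differ by the factor $(-1)^{(j+1)-j} = -1$; hence each $S_{\leq}$ summand cancels its partner in $S_{>}$, and the whole sum vanishes. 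The one place demanding care is precisely this index-and-sign accounting: the cubical face identity carries the shift $i \mapsto i+1$ (in contrast to the $j \mapsto j-1$ of the simplicial case), so I would verify carefully both that the pairing is a genuine bijection of the two index ranges and that this shift produces exactly one sign flip. Once those two points are confirmed, cancellation is automatic and no degeneracy or connection identities enter the argument.
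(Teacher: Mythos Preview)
Your proof is correct and follows essentially the same approach as the paper's: both expand the double sum, split according to whether $i \leq j$ or $i > j$, apply the face--face identity $x\face{}{i,\varepsilon}\face{}{j,\varepsilon'} = x\face{}{j+1,\varepsilon'}\face{}{i,\varepsilon}$ to the $i \leq j$ part, and then observe the sign-reversing cancellation against the $i > j$ part. The only cosmetic difference is that you phrase the cancellation as a sign-reversing bijection of index sets, whereas the paper writes out the re-indexing and the four $(\varepsilon,\varepsilon')$ cases explicitly before declaring the sum zero.
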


\begin{proof}
  This is a standard argument.
    Given $x \in A_n$, we have
    \begin{align*}
        \cmap[n-1] \cmap[n] (x) &= \sum\limits_{\substack{i = 1, \dots, n \\ j = 1, \dots, n-1 \\ \varepsilon, \varepsilon' = 0, 1}} (-1)^{i+j+\varepsilon + \varepsilon'} x \face{}{i,\varepsilon} \face{}{j,\varepsilon'} \\
        &= \sum\limits_{\substack{i = 2, \dots, n \\ j = 1, \dots, i-1 \\ \varepsilon, \varepsilon' = 0, 1}} (-1)^{i+j+\varepsilon+\varepsilon'} x \face{}{i,\varepsilon} \face{}{j,\varepsilon'} + \sum\limits_{\substack{i = 1, \dots, n-1 \\ j = i, \dots, n-1 \\ \varepsilon, \varepsilon' = 0, 1}} (-1)^{i+j+\varepsilon+\varepsilon'} x \face{}{i,\varepsilon} \face{}{j,\varepsilon'} \\
        &= \sum\limits_{\substack{i = 2, \dots, n \\ j = 1, \dots, i-1 \\ \varepsilon, \varepsilon' = 0, 1}} (-1)^{i+j+\varepsilon+\varepsilon'} x \face{}{i,\varepsilon} \face{}{j,\varepsilon'}  + \sum\limits_{\substack{i = 1, \dots, n-1 \\ j = i, \dots, n-1 \\ \varepsilon, \varepsilon' = 0, 1}} (-1)^{i+j+\varepsilon+\varepsilon'} x \face{}{j+1,\varepsilon'} \face{}{i,\varepsilon} \\
        &= \sum\limits_{\substack{j=1, \dots, n-1 \\ i=j+1, \dots, n \\ \varepsilon, \varepsilon' = 0, 1}} (-1)^{i+j+\varepsilon+\varepsilon'} x \face{}{i,\varepsilon}\face{}{j,\varepsilon'} + \sum\limits_{\substack{i=1, \dots, n-1 \\ j=i+1, \dots, n \\ \varepsilon, \varepsilon' = 0, 1}} (-1)^{i+j-1+\varepsilon+\varepsilon'} x\face{}{j,\varepsilon'} \face{}{i,\varepsilon} \\
        &= \sum\limits_{\substack{j=1, \dots, n-1 \\ i=j+1, \dots, n}} \left( (-1)^{i+j} x \face{}{i,0}\face{}{j,0} + (-1)^{i+j+1} x\face{}{i,0}\face{}{j,1} + (-1)^{i+j+1} x\face{}{i,1}\face{}{j,0} + (-1)^{i+j} x\face{}{i,1} \face{}{j,1} \right) \\
        &\quad + \sum\limits_{\substack{i=1, \dots, n-1 \\ j=i+1, \dots, n}} \left( (-1)^{i+j-1} x\face{}{j,0} \face{}{i,0} + (-1)^{i+j} x\face{}{j,0}\face{}{i,1} + (-1)^{i+j} x\face{}{j,1}\face{}{i,0} + (-1)^{i+j-1} x\face{}{j,1}\face{}{i,1} \right) \\
        &= 0.
    \end{align*}
\end{proof}

Given a morphism $f \from A \to B$ of cubical abelian groups, naturality of $f$ implies the square
\[ \begin{tikzcd}
    A_n \ar[r, "{\cmap[n]}"] \ar[d, "f_n"'] & A_{n-1} \ar[d, "f_{n-1}"] \\
    B_n \ar[r, "{\cmap[n]}"] & B_{n-1}
\end{tikzcd} \]
commutes for all $n \geq 1$.
This gives a functor $\cAb \to \Ch$ which maps a cubical abelian group $A$ to the chain complex $A_\bullet$.

\begin{definition} \label{def:degenerate-subcomplex}
    Let $A$ be a cubical abelian group.
    \begin{enumerate}
        \item The \emph{subcomplex of degeneracies} $\D[\degen{}{}] A_\bullet$ of $A_\bullet$ is the chain subcomplex generated by degeneracies.
        That is,
        \[ \D[\degen{}{}] A_n = \left\{ \sum\limits_{i=1}^n x\degen{}{i} \in A_n \mid x \in A_{n-1} \right\}. \]
        \item The \emph{subcomplex of negative connections} $\D[\conn{}{}-]$ of $A_\bullet$ is the chain subcomplex generated by negative connections.
        That is,
        \[ \D[\conn{}{}-] A_n = \left\{ \sum\limits_{i=1}^{n-1} x\conn{}{i,0} \in A_n \mid x \in A_{n-1} \right\}. \]
        \item The \emph{subcomplex of positive connections} $\D[\conn{}{}+]$ of $A_\bullet$ is the chain subcomplex generated by positive connections. 
        That is,
        \[ \D[\conn{}{}+] A_n = \left\{ \sum\limits_{i=1}^{n-1} x\conn{}{i,1} \in A_n \mid x \in A_{n-1} \right\}. \]
        \item The \emph{degenerate subcomplex} $\D A_\bullet$ of $A_\bullet$ is defined by
        \[ \D A_n := \D[\degen{}{}] A_n + \D[\conn{}{}-] A_n = \left\{ \sum\limits_{i=1}^n x\degen{}{i} + \sum\limits_{i=1}^{n-1} y\conn{}{n,0} \mid x, y \in A_{n-1} \right\}. \] 
    \end{enumerate}
\end{definition}

Note that $\D[\degen{}{}] A_0$ is the trivial subgroup, as are $\D[\conn{}{}-] A_n, \D[\conn{}{}+] A_n$ if $n = 0, 1$.

\begin{proposition}[cf.~{\cite[Cor.~3.2]{barcelo-greene-jarrah-welker}}]
    For a cubical abelian group $A$, 
    \begin{enumerate}
        \item the subcomplex of degeneracies is a chain complex;
        \item the subcomplex of negative connections is a chain complex;
        \item the subcomplex of positive connections is a chain complex.
    \end{enumerate}
    That is, for $n \geq 1$, and $\alpha = \degen{}{}, \conn{}{}-, \conn{}{}+$, the restriction of the alternating face map $\cmap[n] \from A_n \to A_{n-1}$ to $\D[\alpha] A_n \subseteq A_n$ takes values in $\D[\alpha] A_{n-1}$, giving a factorization as in the following diagram.
    \[ \begin{tikzcd}
        {\D[\alpha] A_n} \ar[d, hook] \ar[r, dotted, "{\restr{\cmap[n]}{\D[\alpha] A_n}}"] & {\D[\alpha] A_{n-1}} \ar[d, hook] \\
        A_n \ar[r, "{\cmap[n]}"] & A_{n-1}
    \end{tikzcd} \]
\end{proposition}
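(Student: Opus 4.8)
The plan is to reduce to generators and then compute directly with the cubical identities. Since $\cmap[n]$ is additive and each $\D[\alpha] A_{n-1}$ is a subgroup of $A_{n-1}$, it suffices to verify the claim on the cubes generating $\D[\alpha] A_n$: on a single degenerate cube $x\degen{}{i}$ ($1 \le i \le n$) when $\alpha = \degen{}{}$, and on a single connection cube $x\conn{}{i,0}$ (resp.\ $x\conn{}{i,1}$), $1 \le i \le n-1$, when $\alpha = \conn{}{}-$ (resp.\ $\alpha = \conn{}{}+$), where $x \in A_{n-1}$. In each case I would expand $\cmap[n]$ as the signed sum of faces and rewrite every summand by the relevant cubical identity, the goal being to see that the summands which are \emph{not} of the required type cancel in pairs.

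For $\alpha = \degen{}{}$ I would write $\cmap[n](x\degen{}{i}) = \sum_{j,\varepsilon} (-1)^{j+\varepsilon}\, x\degen{}{i}\face{}{j,\varepsilon}$ and apply the identity governing $\degen{}{i}\face{}{j,\varepsilon}$. The two summands with $j = i$ satisfy $x\degen{}{i}\face{}{i,0} = x\degen{}{i}\face{}{i,1} = x$, so they contribute $(-1)^{i} x$ and $(-1)^{i+1} x$ and cancel. Every remaining summand ($j \neq i$) is rewritten by the identity as $x\face{}{j,\varepsilon}\degen{}{i-1}$ (for $j < i$) or $x\face{}{j-1,\varepsilon}\degen{}{i}$ (for $j > i$), each a degenerate cube; hence the whole expression lies in $\D[\degen{}{}] A_{n-1}$.

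The connection cases are the crux, and I would treat $\alpha = \conn{}{}-$ in detail, the case $\alpha = \conn{}{}+$ being entirely symmetric (with the roles of $\varepsilon = 0$ and $\varepsilon = 1$ interchanged, reflecting $\min$ in place of $\max$). Expanding $\cmap[n](x\conn{}{i,0})$ and applying the identities for $\conn{}{i,0}\face{}{j,\varepsilon}$, the only summands that are not already connections arise from the two faces adjacent to the connection, namely $j = i$ and $j = i+1$. For $\varepsilon = 0$ both of these reduce to the identity, contributing $(-1)^{i} x$ and $(-1)^{i+1} x$, which cancel; for $\varepsilon = 1$ both reduce to the \emph{same} degenerate cube $x\face{}{i,1}\degen{}{i}$, contributing $(-1)^{i+1} x\face{}{i,1}\degen{}{i}$ and $(-1)^{i} x\face{}{i,1}\degen{}{i}$, which again cancel. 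This pairing is precisely where a single connection meets its two bounding faces and produces one and the same degeneracy twice with opposite signs; pinning down the indices and signs here is the main obstacle, and it is what forces the junk terms to vanish. All remaining summands, coming from faces $\face{}{j,\varepsilon}$ with $j < i$ or $j > i+1$, are rewritten by the identities as $x\face{}{j,\varepsilon}\conn{}{i-1,0}$ and $x\face{}{j-1,\varepsilon}\conn{}{i,0}$ respectively, each a negative connection, so $\cmap[n](x\conn{}{i,0}) \in \D[\conn{}{}-] A_{n-1}$, completing the argument.
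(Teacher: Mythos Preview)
Your argument is correct. For part (1) it is essentially identical to the paper's own computation: expand $\cmap[n](x\degen{}{i})$, use the cubical identity for $\degen{}{i}\face{}{j,\varepsilon}$, observe that the $j=i$ terms cancel and every other term is a degeneracy.

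The only difference is that the paper does not actually write out parts (2) and (3); it simply cites \cite[Cor.~3.2]{barcelo-greene-jarrah-welker} for the connection cases. Your treatment of $\D[\conn{}{}-]$ is therefore more self-contained than what appears in the paper, but it follows exactly the same template. Your key observation---that the two ``exceptional'' faces $j=i$ and $j=i+1$ of $x\conn{}{i,0}$ produce identical cubes (namely $x$ when $\varepsilon=0$ and the degenerate cube $x\face{}{i,1}\degen{}{i}$ when $\varepsilon=1$) with opposite signs, so that the only terms that are not connections cancel in pairs---is precisely what makes the argument work, and the symmetry you invoke for $\D[\conn{}{}+]$ is valid.
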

\begin{proof}
    We prove (1), and note that (2) and (3) are proved in \cite[Cor.~3.2]{barcelo-greene-jarrah-welker}.
    Given $x \in A_{n-1}$ and $i = 1, \dots, n$, cubical identities imply
    \begin{align*}
        \cmap[n] (x\degen{}{i}) &= \sum\limits_{\substack{j=1, \dots, n \\ \varepsilon = 0, 1}} (-1)^{j + \varepsilon} x \degen{}{i} \face{}{j,\varepsilon} \\
        &= \sum\limits_{\substack{j=1, \dots, i-1 \\ \varepsilon = 0, 1}} (-1)^{j + \varepsilon} x \face{}{j,\varepsilon} \degen{}{i-1} + (-1)^i x + (-1)^{i+1} x + \sum\limits_{\substack{j=i+1, \dots, n \\ \varepsilon = 0, 1}} (-1)^{j + \varepsilon} x\face{}{j-1,\varepsilon}\degen{}{i} \\
        &= \sum\limits_{\substack{j=1, \dots, i-1 \\ \varepsilon = 0, 1}} (-1)^{j + \varepsilon} x \face{}{j,\varepsilon} \degen{}{i-1} + \sum\limits_{\substack{j=i+1, \dots, n \\ \varepsilon = 0, 1}} (-1)^{j + \varepsilon} x\face{}{j-1,\varepsilon}\degen{}{i}.
    \end{align*}
    By definition, this element is in $\D[\degen{}{}] A_{n-1}$. 
\end{proof}
\begin{definition}
    The \emph{Moore complex} functor $\C \from \cAb \to \Ch$ is the functor which sends $A \in \cAb$ to the quotient chain complex $A_\bullet / \D A_\bullet$.
\end{definition}

\begin{remark} \label{rmk:def}
    One may also define the Moore complex of a cubical abelian group by quotienting by degeneracies, by degeneracies and negative connections, or by degeneracies and both connections.
    These definitions all give isomorphic homology functors \cite[Cor.~3.10]{barcelo-greene-jarrah-welker}.
    However, the argument we present for the Hurewicz theorem (\cref{thm:hurewicz1,thm:hurewicz}) relies on quotienting by one type of connection (\cref{rmk:counter-example}).
    Any definitions and proofs which rely on a chosen definition for the Moore complex will be commented on explicitly (\cref{rmk:iso,rmk:conc,rmk:def,rmk:a_n_mul,rmk:na_plus_da}).
\end{remark}

Recall the \emph{pointed free abelian group} functor $F \from \Set_* \to \Ab$ is defined by
\[ F(S, s) := \Z^{\oplus |S|} / \langle s \rangle , \]
where $|S|$ denotes the cardinality of $S$ and $\langle s \rangle$ denotes the subgroup generated by $s$.
This induces a functor $F \from \cSet_* \to \cAb$ by post-composition.
That is, the group $(F(X,x))_n$ is the pointed free abelian group $F(X_n, x\ndegen)$.
The pointed free abelian group functor is left adjoint to the forgetful functor $U \from \cAb \to \cSet_*$ defined by $A \mapsto (A, 0)$.

\begin{definition}
    The \emph{reduced homology} functor $\pH \from \cSet_* \to \fcat{\N}{\Ab}$ is the composite
    \[ \cSet_* \xrightarrow{F} \cAb \xrightarrow{\C} \Ch \xrightarrow{H_\bullet} \fcat{\N}{\Ab}. \]
\end{definition}
\begin{remark} \label{rmk:iso}
    The subcomplexes of negative and positive connections are chain homotopy equivalent to 0 by \cite[Cor.~3.10]{barcelo-greene-jarrah-welker}.
    This implies each map in the diagram
    \[ \begin{tikzcd}[sep = small]
        {} & {\D[\degen{}{}] A_\bullet + \D[\conn{}{}-] A_\bullet} \ar[rd, "\sim"] & {} \\
        {\D[\degen{}{}] A_\bullet} \ar[ur, "\sim"] \ar[dr, "\sim"] && {\D[\degen{}{}] A_\bullet + \D[\conn{}{}-] A_\bullet + \D[\conn{}{}+] A_\bullet} \\
        {} & {\D[\degen{}{}] A_\bullet + \D[\conn{}{}+] A_\bullet} \ar[ur, "\sim"] & {}
    \end{tikzcd} \]
    is a chain homotopy equivalence.
    This induces a diagram
    \[ \begin{tikzcd}[sep = small]
        {} & {A_\bullet / (\D[\degen{}{}] A_\bullet + \D[\conn{}{}-] A_\bullet)} \ar[rd, "\sim"] & {} \\
        {A_\bullet / \D[\degen{}{}] A_\bullet} \ar[ur, "\sim"] \ar[dr, "\sim"] && {A_\bullet / (\D[\degen{}{}] A_\bullet + \D[\conn{}{}-] A_\bullet + \D[\conn{}{}+] A_\bullet)} \\
        {} & {A_\bullet / (\D[\degen{}{}] A_\bullet + \D[\conn{}{}+] A_\bullet)} \ar[ur, "\sim"] & {}
    \end{tikzcd} \]
    in which every map is, again, a chain homotopy equivalence.
    In particular, these maps induce isomorphisms on homology, thus all definitions of the Moore complex given in \cref{rmk:def} give isomorphic homology groups.
\end{remark}

\begin{remark}
  When defining (reduced) homology of simplicial sets, one considers a similar composite
      \[ \sSet_* \rightarrow \mathsf{sAb} \rightarrow \Ch \rightarrow \fcat{\N}{\Ab}\text{,} \]
      however the middle functor $\mathsf{sAb} \rightarrow \Ch$ can be taken to either quotient by degeneracies or not.
      In the case of cubical homology, we do not have this choice --- if we do not quotient by degeneracies, we would get that $\pH_1(\Box^0 \sqcup \Box^0)$ is $\mathbb{Z}$.
\end{remark}

We show that homology is a homotopy functor i.e.~it sends homotopies to equalities.
\begin{theorem} \label{thm:homology_htpy_eq}
    Let $f, g \from (X, x) \to (Y, y)$ be pointed cubical maps.
    If $h \from X \gprod \cube{1} \to Y$ is a based homotopy from $f$ to $g$ then $\pH_* f = \pH_* g$. 
\end{theorem}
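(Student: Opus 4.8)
The plan is to show that the based homotopy $h$ induces a chain homotopy between the chain maps that $f$ and $g$ induce on Moore complexes. Since chain-homotopic maps agree on homology and $\pH_\ast = H_\bullet \circ \C \circ F$, this immediately yields $\pH_\ast f = \pH_\ast g$. Note first that, as $f$ and $g$ are based cubical maps, the chain maps $\C F f, \C F g \from \C F X \to \C F Y$ are defined (the cubical maps $F f, F g$ preserve degeneracies and connections, hence the degenerate subcomplex), so it suffices to exhibit a chain homotopy between them.

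To build the homotopy I would use a \emph{prism operator} coming from the geometric product with $\cube 1$. Let $\iota \in (\cube 1)_1$ be the nondegenerate $1$-cube. For $w \in X_n$, \cref{thm:gprod_cube} produces an $(n+1)$-cube $(w, \iota)$ of $X \gprod \cube 1$; set $P_n(w) := h(w, \iota) \in Y_{n+1}$, extend $\Z$-linearly to $P_n \from (FX)_n \to (FY)_{n+1}$, and define $s_n := (-1)^n P_n$. The two inputs I need from \cref{thm:gprod_cube} are: for $1 \le i \le n$ one has $(w,\iota)\face{}{i,\varepsilon} = (w\face{}{i,\varepsilon}, \iota)$, so $P_n(w)\face{}{i,\varepsilon} = P_{n-1}(w\face{}{i,\varepsilon})$; and the two new faces satisfy $h\big((w,\iota)\face{}{n+1,0}\big) = f(w)$ and $h\big((w,\iota)\face{}{n+1,1}\big) = g(w)$, by the defining property of the homotopy. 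A routine sign computation then gives
\[ \cmap[n+1] P_n(w) = P_{n-1}(\cmap[n] w) + (-1)^{n+1} f(w) + (-1)^n g(w), \]
which rearranges to $\cmap[n+1] s_n + s_{n-1} \cmap[n] = \pt g - \pt f$ on the unnormalized complex $(FX)_\bullet \to (FY)_\bullet$, where $\pt f, \pt g$ denote the induced chain maps. I would also observe that $P_n$ is well defined on the \emph{pointed} free abelian group: the based homotopy condition forces $h(x\ndegen, \iota) = y\ndegen[n+1]$, so the basepoint of $(FX)_n$ is sent to $0$.

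The remaining, genuinely cubical, point is that $s$ descends to the Moore complexes, i.e.\ that $P_n(\D(FX)_n) \subseteq \D(FY)_{n+1}$. Since $\D(FX)_\bullet$ is generated by the degeneracies $x\degen{}{i}$ and negative connections $x\conn{}{i,0}$ of lower cubes, I would check the claim on these generators. For $x \in X_{n-1}$, \cref{thm:gprod_cube} gives $(x\degen{}{i}, \iota) = (x,\iota)\degen{}{i}$ and $(x\conn{}{i,0}, \iota) = (x,\iota)\conn{}{i,0}$ in the relevant ranges of $i$, whence $P_n(x\degen{}{i}) = h(x,\iota)\degen{}{i}$ and $P_n(x\conn{}{i,0}) = h(x,\iota)\conn{}{i,0}$ are again a degeneracy and a negative connection in $Y$, so they lie in $\D(FY)_{n+1}$. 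Therefore $s$ induces a chain homotopy between $\C F f$ and $\C F g$, and applying $H_\bullet$ gives $\pH_\ast f = \pH_\ast g$.

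The main obstacle I anticipate is exactly this last descent step, rather than the prism identity itself. It is where the specific choice of Moore complex --- quotienting by degeneracies together with the negative connection --- is essential (cf.\ \cref{rmk:def}), since one must confirm that the prism of a degenerate cube remains inside the degenerate subcomplex using the product formulas of \cref{thm:gprod_cube} for both degeneracies and connections. By contrast, the chain-homotopy identity on the unnormalized complex is the standard prism argument and demands only careful bookkeeping of signs.
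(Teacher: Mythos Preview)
Your proposal is correct and follows essentially the same approach as the paper: both construct the prism operator $z \mapsto h(z,\iota)$ with a sign $(-1)^n$ (the paper uses $(-1)^{n+1}$, yielding $f-g$ instead of your $g-f$, an immaterial difference), verify well-definedness on pointed free abelian groups via the based-homotopy condition, check descent to the Moore complex using \cref{thm:gprod_cube} on degeneracies and negative connections, and compute the chain-homotopy identity. The paper actually treats the descent step quite briefly, so your singling it out as the main obstacle is a bit overstated, but your handling of it is exactly right.
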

\begin{proof}
    It suffices to construct a chain homotopy between chain maps $f, g \from \C FX \to \C FY$.
    For $n \geq 0$, define $\alpha_n \from FX_n \to FY_{n+1}$ on basis elements by
    \[ \alpha_n(z) = (-1)^{n+1} h(z, \id[\cube{1}]). \]
    Since $h$ is a based homotopy, $\alpha_n(x) = \pm y$, thus this map is well-defined.
    If $z$ is a degeneracy or negative connection then $\alpha_n(z)$ is as well by \cref{thm:gprod_cube}.
    Thus, $\alpha_n$ induces a map $FX/\D FX \to FY/\D FY$.

    It remains to show $\alpha_{n-1} \cmap + \cmap \alpha_n = f - g$.
    Verifying this equality on basis elements, given $z \in X_n$, we have
    \begin{align*}
        (\alpha_{n-1} \cmap + \cmap \alpha_n)(z) &= \sum\limits_{\substack{i=1, \dots, n \\ \varepsilon = 0, 1}}(-1)^{i + \varepsilon}(-1)^{n} h(z\face{}{i,\varepsilon}, \id[\cube{1}]) + \sum\limits_{\substack{i=1, \dots, n+1 \\ \varepsilon = 0, 1}} (-1)^{i+\varepsilon} (-1)^{n+1} h(z, \id[\cube{1}]) \face{}{i,\varepsilon} \\
        &= \sum\limits_{\substack{i=1, \dots, n \\ \varepsilon = 0, 1}}(-1)^{i + \varepsilon}(-1)^{n} h(z\face{}{i,\varepsilon}, \id[\cube{1}]) + \sum\limits_{\substack{i=1, \dots, n \\ \varepsilon = 0, 1}} (-1)^{i+\varepsilon} (-1)^{n+1} h(z\face{}{i,\varepsilon}, \id[\cube{1}]) \\
        &\qquad + (-1)^{2n+2} h(z, \face{}{1,0}) + (-1)^{2n+3} h(z, \face{}{1,1}) \tag{$\ast$} \\
        &= h(z, \face{}{1,0}) - h(z, \face{}{1,1}) \\
        &= f(z) - g(z).
    \end{align*}
    where $(\ast)$ follows from \cref{thm:gprod_cube}.
\end{proof}
\begin{corollary} \label{thm:homology_htpy_equiv_iso}
    If $f \from X \to Y$ is a based homotopy equivalence then $\pH_* f \from \pH_* X \to \pH_* Y$ is an isomorphism of graded abelian groups.
    \noproof
\end{corollary}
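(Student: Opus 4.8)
The plan is to deduce this immediately from \cref{thm:homology_htpy_eq} together with the functoriality of reduced homology. Recall that $\pH_*$ is by definition the composite
\[ \cSet_* \xrightarrow{F} \cAb \xrightarrow{\C} \Ch \xrightarrow{H_\bullet} \fcat{\N}{\Ab} \]
of three functors, and hence is itself a functor; in particular it preserves composition and sends identities to identities.

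First I would unpack the hypothesis. A based homotopy equivalence $f \from X \to Y$ comes equipped with a based map $g \from Y \to X$ together with a based homotopy from $g f$ to $\id[X]$ and a based homotopy from $f g$ to $\id[Y]$. Applying \cref{thm:homology_htpy_eq} to each of these two based homotopies gives the equalities $\pH_*(g f) = \pH_*(\id[X])$ and $\pH_*(f g) = \pH_*(\id[Y])$.

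Finally, I would invoke functoriality to rewrite these: the left-hand sides become $\pH_* g \circ \pH_* f$ and $\pH_* f \circ \pH_* g$, while the right-hand sides become $\id[\pH_* X]$ and $\id[\pH_* Y]$. This exhibits $\pH_* g$ as a two-sided inverse of $\pH_* f$, so $\pH_* f$ is an isomorphism of graded abelian groups. There is no genuine obstacle here: all the substantive work is already carried out in \cref{thm:homology_htpy_eq}, and the only point to keep in mind is that the homotopies witnessing the equivalence must be \emph{based} in order for that theorem to apply --- which is exactly what is packaged into the definition of a based homotopy equivalence.
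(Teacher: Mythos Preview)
Your proposal is correct and is exactly the standard argument the paper has in mind: the corollary is stated with \verb|\noproof| (i.e., no proof is given beyond the \qedsymbol), signaling that it follows immediately from \cref{thm:homology_htpy_eq} and functoriality of $\pH_*$, precisely as you spell out.
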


\section{Hurewicz Theorem} \label{sec:hurewicz}

In this final section of the paper, we will construct the Hurewicz homomorphism (\cref{def:Hurewicz-map}) and prove the Hurewicz theorem (\cref{thm:hurewicz1,thm:hurewicz}).
The preliminary steps of our approach will follow those found in the classical proofs for simplicial homology \cite{weibel:homological-algebra,goerss-jardine}.

To this end, we begin by showing that for a cubical abelian group, its $n$-th homotopy group and its $n$-th homology group coincide (\cref{thm:pi_n_h_n}).
The key technical ingredient of this is a theorem (cf.~\cite[Thm.~III.2.1]{goerss-jardine} and \cite[Lem.~8.3.7]{weibel:homological-algebra} in the simplicial setting) asserting that for any cubical abelian group $A$, its $n$-th abelian group $A_n$ can be written as a direct sum
\[A_n \cong NA_n \oplus \D A_n\]
of the \emph{normalized subcomplex} (\cref{def:normalized-subcomplex}) and the \emph{degenerate subcomplex} (\cref{def:degenerate-subcomplex}).
This is done in \cref{thm:na_oplus_da}, and the proof of \cref{thm:pi_n_h_n} follows.

From this point on, our approach takes a different turn than the proofs found in the literature for the simplicial case, as it is based on the notion of loop space of a (pointed) Kan complex and the interaction between the loop space and the free-forgetful adjunction between cubical sets and cubical abelian groups.

\begin{theorem} \label{thm:pi_n_h_n}
    For a cubical abelian group $A$, we have an isomorphism
    \[ \pi_n(UA, 0) \cong \pH_n(A, 0) \]
    natural in $A$.
\end{theorem}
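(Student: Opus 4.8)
The plan is to compute $\pi_n(UA,0)$ directly from the cubes of $A$ and then to recognize the answer as the homology of the Moore complex $\C A$, using \cref{thm:na_oplus_da} to translate between the normalized and degenerate pictures. Throughout, write $Z_n := \bigcap_{i,\varepsilon}\ker\bigl(\face{}{i,\varepsilon}\colon A_n\to A_{n-1}\bigr)$ for the subgroup of $n$-cubes all of whose faces vanish.

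First I would unwind the iterated loop space. Using the explicit description recorded after the definition of the loop space — an $m$-cube of $\loopsp(Y,y)$ is an $(m+1)$-cube of $Y$ whose $\face{}{1,0}$- and $\face{}{1,1}$-faces are degenerate at the basepoint — a direct induction identifies a $0$-cube of $\loopsp[n](UA,0)$ with an element of $Z_n$, and a $1$-cube of $\loopsp[n](UA,0)$ with an $(n+1)$-cube $W\in A_{n+1}$ satisfying $W\face{}{i,\varepsilon}=0$ for all $i\le n$, whose endpoints are $W\face{}{n+1,0}$ and $W\face{}{n+1,1}$. Thus the underlying set of $\pi_n(UA,0)=\pi_0\loopsp[n](UA,0)$ is $Z_n$ modulo the relation $z\we z'$ of being joined by such a $W$; this is an equivalence relation because $\loopsp[n](UA,0)$ is a Kan complex.

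Next I would read off $H_n\C A$ from the same data. By \cref{thm:na_oplus_da} the splitting $A_n\iso NA_n\oplus\D A_n$ makes the quotient $A_\bullet\to\C A$ restrict to an isomorphism $NA\iso\C A$ of chain complexes, where a normalized $n$-cube has all but one of its faces equal to $0$ and the differential is, up to sign, the surviving face. In these terms $Z_n$ is the group of normalized $n$-cycles; moreover a witness $W$ for $z\we 0$ has all faces zero except $W\face{}{n+1,0}=z$, so its Moore differential is $\pm z$, and conversely every normalized boundary arises this way. Since $A$ is a group, $z\we z'$ holds iff $z-z'\we 0$ — subtract the degenerate cube $z'\degen{}{n+1}$ from a witness — so the relation is translation invariant and $\pi_n(UA,0)=Z_n/\{z : z\we 0\}$; a coordinate permutation aligning the distinguished direction then identifies this quotient, as yet only as a bijection of sets, with $H_n NA\iso H_n\C A=\pH_n(A,0)$.

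The main obstacle is to promote this bijection to a group isomorphism, because the operation on $\pi_n$ is concatenation (\cref{def:concat}) rather than the addition carried by $A$. The crux is the explicit concatenation square
\[ \Theta := u\conn{}{n,0}+v\degen{}{n+1}\in A_{n+1}, \qquad u,v\in Z_n, \]
whose faces the cubical identities compute as $\Theta\face{}{n,0}=u$, $\Theta\face{}{n,1}=0$, $\Theta\face{}{n+1,1}=v$, and $\Theta\face{}{j,\varepsilon}=0$ for $j<n$, with remaining (concatenation) face $\Theta\face{}{n+1,0}=u+v$. Hence the concatenation of $u$ and $v$ is $u+v$, so the bijection above is an isomorphism of abelian groups; note that the negative connection $\conn{}{n,0}$ is indispensable here, matching \cref{rmk:conc,rmk:def}. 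Finally, $Z_n$, the loop spaces, and $\C A$ are all functorial in $A$, so the isomorphism $\pi_n(UA,0)\cong\pH_n(A,0)$ is natural, as required.
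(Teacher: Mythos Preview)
Your proposal is correct and follows essentially the same approach as the paper's own proof. The paper organizes the argument into separate auxiliary results --- \cref{thm:pi_n_a_n_mul} (precisely your concatenation square $\Theta = u\conn{}{n,0}+v\degen{}{n+1}$), \cref{thm:pi_n_h_n_na} (identifying $\pi_n(UA,0)$ with $H_n(NA_\bullet)$ via the same description of $0$- and $1$-cubes of $\loopsp[n](UA,0)$), and \cref{thm:na_oplus_da} --- and then combines them; you weave these into a single continuous argument. One small point: your ``coordinate permutation aligning the distinguished direction'' is unnecessary, since the paper's $NA_{n+1}$ already has $(n+1,0)$ as its distinguished face, so your witnesses $W$ for $z\we 0$ lie directly in $NA_{n+1}$ and their differential is $\pm z$ without any reindexing.
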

To prove \cref{thm:pi_n_h_n}, we establish some auxiliary results and definitions.
\begin{proposition} \label{thm:pi_n_a_n_mul}
    Let $A$ be a cubical abelian group and $n \geq 1$.
    Given $[x], [y] \in \pi_n(UA, 0)$, we have that
    \[ [x + y] = [x][y]. \]
\end{proposition}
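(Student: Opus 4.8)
The plan is to produce, directly from the abelian group structure of $A$, an explicit concatenation square whose distinguished face is the sum $x + y$. Recall that a class in $\pi_n(UA, 0) = \pi_0 \loopsp[n](UA, 0)$ is represented by an $n$-cube $x \in A_n$ all of whose faces $x\face{}{i,\varepsilon}$ vanish, and that the product $[x][y]$ is computed by applying \cref{def:concat} in the loop space $\loopsp[n-1](UA,0)$, i.e.\ as the appropriate face of a $2$-cube of $\loopsp[n-1](UA,0)$. Unwinding the loop space construction, such a $2$-cube is an $(n+1)$-cube $w \in A_{n+1}$ whose faces in the loop directions $1, \dots, n-1$ vanish and whose concatenation-square data live in coordinates $n$ and $n+1$; concretely, one needs $w\face{}{i,\varepsilon} = 0$ for $i < n$, together with $w\face{}{n,0} = x$, $w\face{}{n,1} = 0$, and $w\face{}{n+1,1} = y$, and then $[x][y] = [w\face{}{n+1,0}]$.

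First I would write down the candidate filler
\[ w := x\conn{}{n,0} + y\degen{}{n+1} \in A_{n+1}. \]
Then I would compute its faces using the cubical identities relating connections to faces, and degeneracies to faces, together with the hypothesis that every face of $x$ and of $y$ is zero. The negative connection contributes $x\conn{}{n,0}\face{}{n,0} = x$ and $x\conn{}{n,0}\face{}{n+1,0} = x$, and vanishes on its remaining principal faces $x\conn{}{n,0}\face{}{n,1}$ and $x\conn{}{n,0}\face{}{n+1,1}$; the degeneracy contributes $y\degen{}{n+1}\face{}{n+1,0} = y\degen{}{n+1}\face{}{n+1,1} = y$ and vanishes elsewhere; and all faces of both summands in directions $i < n$ vanish because the corresponding face of $x$ or $y$ does. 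Adding these shows that $w$ satisfies all of the open-box conditions listed above and that its distinguished face is $w\face{}{n+1,0} = x + y$.

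Since $w$ is then a genuine concatenation square in $\loopsp[n-1](UA,0)$ exhibiting $x + y$ as a concatenation of $x$ and $y$, we conclude $[x][y] = [x + y]$.

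I expect the main obstacle to be bookkeeping rather than anything conceptual: correctly identifying that the outermost concatenation in $\loopsp[n]$ takes place in the coordinate pair $(n, n+1)$ (so that the connection and the degeneracy must be applied in precisely those slots), and verifying that $w$ restricts correctly on \emph{all} faces of the open box --- including the auxiliary faces in the loop directions --- and not merely on the two principal faces carrying $x$ and $y$. The vanishing of every face of the representatives $x$ and $y$ is exactly what makes these auxiliary conditions hold automatically, so the only real content is the face computation for the connection and the degeneracy, which is routine given the cubical identities.
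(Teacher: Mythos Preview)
Your proposal is correct and follows exactly the paper's approach: the paper's entire proof is the single sentence that the $(n+1)$-cube $x\conn{}{n,0} + y\degen{}{n+1}$ is a concatenation square in $\loopsp[n-1] UA$ witnessing $x+y$ as a concatenation of $x$ and $y$. Your write-up simply unpacks the face computations that justify this, and those computations are all correct.
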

\begin{proof}
    The $(n+1)$-cube $x\conn{}{n,0} + y\degen{}{n+1}$ is a concatenation square in $\loopsp[n-1] UA$ witnessing $[x + y]$ as a concatenation of $[x]$ and $[y]$.
\end{proof}
\begin{corollary} \label{thm:pi_n_a_n_comm}
    For any cubical abelian group $A$ and $n \geq 1$, the group $\pi_n(UA, 0)$ is abelian. \noproof
\end{corollary}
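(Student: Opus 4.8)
The plan is to deduce commutativity directly from \cref{thm:pi_n_a_n_mul}, which is precisely the statement that the (multiplicatively written) concatenation product on $\pi_n(UA, 0)$ agrees with the additive structure inherited from the abelian group $A$. First I would fix arbitrary classes $[x], [y] \in \pi_n(UA, 0)$, represented by $0$-cubes $x, y$ of $\loopsp[n] UA$, i.e.\ by $n$-cubes of $A$ whose boundary is suitably degenerate. By \cref{thm:pi_n_a_n_mul} we then have $[x][y] = [x+y]$, where $x + y$ denotes the sum computed in the abelian group $A_n$.

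Next I would invoke the standing hypothesis that $A$ is a cubical abelian group: each $A_n$ is an abelian group, so $x + y = y + x$ as $n$-cubes, and in particular $[x+y] = [y+x]$. Applying \cref{thm:pi_n_a_n_mul} a second time yields $[y+x] = [y][x]$, and chaining these identities gives $[x][y] = [y][x]$. Since $[x]$ and $[y]$ were arbitrary, the concatenation operation on $\pi_n(UA, 0)$ is commutative, as required.

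I do not expect any real obstacle: the entire substance of the statement has already been packaged into \cref{thm:pi_n_a_n_mul}, leaving only the tautological observation that addition in $A_n$ is commutative. The single point worth a remark is that $x + y$ (and $y + x$) genuinely represents an element of $\pi_n(UA, 0)$ — that is, that the sum of two cubes with degenerate boundary again has degenerate boundary — but this is immediate, since faces are computed levelwise and the zero cube is the basepoint, so this is already implicit in the proof of \cref{thm:pi_n_a_n_mul}.
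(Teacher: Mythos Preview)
Your proposal is correct and matches the paper's intent: the corollary is stated with no proof precisely because it follows immediately from \cref{thm:pi_n_a_n_mul} via $[x][y] = [x+y] = [y+x] = [y][x]$, exactly as you argue.
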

\begin{remark} \label{rmk:a_n_mul}
    When using a definition of concatenation mentioned in \cref{rmk:conc}, an analogous proof of \cref{thm:pi_n_a_n_mul} using positive connections applies.
\end{remark}

\begin{definition} \label{def:normalized-subcomplex}
    For a cubical abelian group $A$ and $n \geq 0$, the \emph{normalized subcomplex} $NA_\bullet$ of $A_\bullet$ is the intersection of kernels
    \[ (NA)_n := \bigcap\limits_{(i,\varepsilon) \neq (n,0)} \ker \face{}{i,\varepsilon} \subseteq A_n. \]
\end{definition}
In particular, $NA_0 = A_0$.
\begin{proposition}
    For a cubical abelian group $A$, the normalized subcomplex of $A_\bullet$ is indeed a chain complex.
    That is, for $n \geq 0$, the restriction of the alternating face map $\cmap[n] \from A_n \to A_{n-1}$ to $NA_n \subseteq A_n$ takes values in $NA_{n-1}$, giving a factorization as in the following diagram.
    \[ \begin{tikzcd}
        NA_n \ar[d, hook] \ar[r, dotted, "{\restr{\cmap[n]}{NA_n}}"] & NA_{n-1} \ar[d, hook] \\
        A_n \ar[r, "{\cmap[n]}"] & A_{n-1}
    \end{tikzcd} \]
\end{proposition}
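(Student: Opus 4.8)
The plan is to reduce the whole statement to one short commutation of face operators. The first observation is that the normalization hypotheses make the alternating face map collapse to a single term: if $x \in NA_n$, then by definition $x\face{}{i,\varepsilon} = 0$ for every $(i,\varepsilon) \neq (n,0)$, so in the defining sum $\cmap[n](x) = \sum_{i,\varepsilon}(-1)^{i+\varepsilon}\,x\face{}{i,\varepsilon}$ only the $(n,0)$-summand survives, giving $\cmap[n](x) = (-1)^{n}\,x\face{}{n,0}$. Since $NA_{n-1}$ is a subgroup, the scalar $(-1)^{n}$ is irrelevant, and it therefore suffices to prove the single membership $x\face{}{n,0} \in NA_{n-1}$.

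Next I would unpack this target membership. By definition of $NA_{n-1}$, I must check that $x\face{}{n,0}\face{}{j,\eta} = 0$ for every $(j,\eta) \neq (n-1,0)$ with $1 \le j \le n-1$ and $\eta \in \{0,1\}$. The heart of the argument is to commute the two faces past one another using the right-operator form of the first cubical identity, namely $x\face{}{a,\varepsilon}\face{}{b,\varepsilon'} = x\face{}{b+1,\varepsilon'}\face{}{a,\varepsilon}$ for $a \le b$ (the same form already used in the proof of \cref{thm:cab_ch}). Reading this with $a = j$, $b = n-1$, $\varepsilon = \eta$, $\varepsilon' = 0$, which is legitimate precisely because $j \le n-1$, yields $x\face{}{n,0}\face{}{j,\eta} = x\face{}{j,\eta}\face{}{n-1,0}$. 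Since $j < n$ forces $(j,\eta) \neq (n,0)$, the defining condition of $NA_n$ gives $x\face{}{j,\eta} = 0$, so the right-hand side vanishes. Hence $x\face{}{n,0}\face{}{j,\eta} = 0$ for all the required $(j,\eta)$, establishing $x\face{}{n,0} \in NA_{n-1}$ and, with the collapse from the first step, completing the proof.

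I expect the only genuine obstacle to be careful bookkeeping with the index shifts and the right-action convention in the cubical identity, rather than any conceptual difficulty. Two sanity checks are worth recording while writing it up. First, the commutation actually forces $x\face{}{n,0}\face{}{j,\eta}$ to vanish for \emph{all} $1 \le j \le n-1$, including the excluded pair $(n-1,0)$ itself (there one gets $x\face{}{n-1,0}\face{}{n-1,0} = 0$); this over-vanishing is harmless, since membership in $NA_{n-1}$ only constrains the faces away from $(n-1,0)$. Second, the low-dimensional cases are immediate: for $n = 1$ one has $NA_0 = A_0$, so $\cmap[1](x) = -\,x\face{}{1,0}$ lands in $NA_0$ with nothing to verify.
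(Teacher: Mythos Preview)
Your proof is correct and follows exactly the approach the paper takes: the paper's entire proof is the one-liner ``Follows by the cubical identity $\partial_{j, \varepsilon'} \partial_{i, \varepsilon} = \partial_{i+1, \varepsilon} \partial_{j, \varepsilon'}$,'' and your argument simply unpacks this, first collapsing $\cmap[n](x)$ to $(-1)^n x\face{}{n,0}$ and then using that identity to show each $x\face{}{n,0}\face{}{j,\eta}$ vanishes.
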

\begin{proof}
    Follows by the cubical identity $\partial_{j, \varepsilon'} \partial_{i, \varepsilon} = \partial_{i+1, \varepsilon} \partial_{j, \varepsilon'}$.
\end{proof}

For a cubical abelian group $A$ and $n \geq 1$, by definition of the $n$-th loop space, there is a natural bijection
\[ \loopsp[n](UA, 0)_0 \cong \ker \restr{\face{}{n,0}}{NA_n} \]
between 0-cubes of the $n$-th loop space of $(UA, 0)$ and $n$-cubes in the kernel $\ker \face{}{n,0} \subseteq NA_n$ of the face map $\face{}{n,0} \from A_n \to A_{n-1}$ restricted to $NA_n \subseteq A_n$.
\begin{lemma} \label{thm:pi_n_h_n_na}
    For a cubical abelian group $A$ and $n \geq 1$, the bijection
    \[ \loopsp[n](UA, 0)_0 \cong \ker \restr{\face{}{n,0}}{NA_n} \]
    induces a well-defined group isomorphism
    \[ \pi_n(UA, 0) \cong H_n(NA_\bullet), \]
    where $H_n (NA_\bullet)$ denotes the $n$-th homology group of the chain complex $NA_\bullet$.
\end{lemma}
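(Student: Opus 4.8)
The plan is to promote the given bijection $\loopsp[n](UA,0)_0 \cong \ker\restr{\face{}{n,0}}{NA_n}$ to the stated isomorphism by recognizing both sides of a homology computation inside the loop space, and then applying the first isomorphism theorem. I would first observe that $\ker\restr{\face{}{n,0}}{NA_n}$ is precisely the group $Z_n(NA_\bullet)$ of $n$-cycles: for $x \in NA_n$ every face $x\face{}{i,\varepsilon}$ with $(i,\varepsilon)\neq(n,0)$ vanishes, so $\cmap[n](x) = (-1)^n\, x\face{}{n,0}$ and hence $\ker\restr{\cmap[n]}{NA_n} = \ker\restr{\face{}{n,0}}{NA_n}$. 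Since $\loopsp[n](UA,0)$ underlies a cubical abelian group --- being the iterated loop space of the cubical abelian group $UA$ at $0$, built from pullbacks of the additive maps $(\face{*}{1,0},\face{*}{1,1})$ along the zero map --- the bijection is an isomorphism of abelian groups onto this subgroup of $A_n$, where the operation on $\loopsp[n](UA,0)_0$ is the pointwise addition inherited from $A$.

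Composing the bijection with the quotient $\loopsp[n](UA,0)_0 \to \pi_0\loopsp[n](UA,0) = \pi_n(UA,0)$ yields a map $\phi \from Z_n(NA_\bullet) \to \pi_n(UA,0)$, which is surjective because the bijection is onto the $0$-cubes. By \cref{thm:pi_n_a_n_mul} we have $[x+y]=[x][y]$, so $\phi$ carries pointwise addition to concatenation and is a group homomorphism (sending $0$ to the basepoint class). It then remains to identify $\ker\phi$ with $\im\restr{\cmap[n+1]}{NA_{n+1}} = B_n(NA_\bullet)$, after which the first isomorphism theorem delivers $\pi_n(UA,0) \cong Z_n(NA_\bullet)/B_n(NA_\bullet) = H_n(NA_\bullet)$.

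The crux is an explicit description of paths. Unwinding the $n$-fold iterated loop space, the $j$-th face operator of $\loopsp[n]$ is the $(n+j)$-th face operator of $A$, so a $1$-cube of $\loopsp[n](UA,0)$ is an $(n+1)$-cube $w \in A_{n+1}$ with $w\face{}{i,\varepsilon}=0$ for all $1\le i\le n$ and $\varepsilon=0,1$, whose endpoints as a path are $w\face{}{n+1,0}$ and $w\face{}{n+1,1}$. For $B_n \subseteq \ker\phi$, given $x = \cmap[n+1](z)$ with $z\in NA_{n+1}$, the cube $(-1)^{n+1}z$ has all its $i\le n$ faces and its $(n+1,1)$-face equal to $0$ and its $(n+1,0)$-face equal to $x$, hence is a path from $x$ to the basepoint, giving $[x]=[0]$. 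Conversely, since $\loopsp[n](UA,0)$ is a Kan complex, $x\in\ker\phi$ means $x$ is joined to $0$ by a single $1$-cube $w$; whichever endpoint equals $0$, the vanishing of that endpoint together with the loop conditions forces $w\in NA_{n+1}$, whence $\cmap[n+1](w)=\pm\,x$ and $x\in B_n$.

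The main obstacle is exactly this middle step: correctly unwinding the iterated loop space to see that a null-homotopy witness for a cycle is an element of the \emph{normalized} subcomplex $NA_{n+1}$ with alternating-face boundary $\pm x$. Two points need care --- that the loop coordinates occupy precisely the first $n$ positions, so that a path's endpoints sit in coordinate $n+1$, and that one endpoint being the basepoint $0$ (rather than an arbitrary cube) is exactly what upgrades the witness from ``all faces below $n+1$ vanish'' to genuine membership in $NA_{n+1}$. Reducing a general path-component coincidence to a single connecting $1$-cube relies on path-connectedness in a Kan complex being an equivalence relation cut out by single $1$-cubes.
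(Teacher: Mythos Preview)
Your proof is correct and follows essentially the same approach as the paper's: both identify $Z_n(NA_\bullet)$ with $\loopsp[n](UA,0)_0$, invoke \cref{thm:pi_n_a_n_mul} for the homomorphism property, and reduce the isomorphism to the equivalence ``$x$ is null-homotopic $\Leftrightarrow$ $x$ is a boundary,'' proven via the same translation between $1$-cubes of $\loopsp[n](UA,0)$ and elements of $NA_{n+1}$. The paper packages this as a map $\pi_n \to H_n$ and checks well-definedness, injectivity, and surjectivity directly, whereas you go $Z_n \to \pi_n$ and compute the kernel via the first isomorphism theorem; the content is identical.

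One imprecision to flag: your claim that ``whichever endpoint equals $0$ \ldots forces $w \in NA_{n+1}$'' is not quite right. Since $NA_{n+1}$ allows only the $(n{+}1,0)$-face to be nonzero, you need the $(n{+}1,1)$-endpoint of $w$ to vanish; if instead $w\face{}{n+1,0}=0$ and $w\face{}{n+1,1}=x$, then $w\notin NA_{n+1}$. The fix is immediate---replace $w$ by $w - x\degen{}{n+1}$, which lands in $NA_{n+1}$ with $(n{+}1,0)$-face $-x$---and this is exactly the subtraction trick the paper uses (in the form $u - y\degen{}{n+1}$) for its well-definedness direction.
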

\begin{proof}
    This map is well-defined as, given a 1-cube $u \in \loopsp[n](UA, 0)_1$ witnessing an equality $[x] = [y]$ in $\pi_n UA$, we may regard $u$ as an $(n+1)$-cube $u \in A_{n+1}$ where $u\face{}{n,0} = x$, $u\face{}{n,1} = y$, and all other faces of $u$ are 0.
    The $(n+1)$-cube $u - y\degen{}{n+1} \in A_{n+1}$ is an element of $NA_{n+1}$ whose $\face{}{n,0}$-face is $x - y$.
    This map is a group homomorphism by \cref{thm:pi_n_a_n_mul}.

    By definition, the kernel of this map is the set of elements $[x] \in \pi_n(UA, 0)$ such that $[x] = [0\ndegen]$.
    Hence, this map is injective.
    Surjectivity follows since the map $\loopsp[n](UA, 0)_0 \to \ker \face{}{n,0}$ is surjective.
\end{proof}

\begin{theorem}\label{thm:na_oplus_da}
    For a cubical abelian group $A$ and $n \geq 0$, we have
    \[ A_n \cong NA_n \oplus \D A_n. \]
\end{theorem}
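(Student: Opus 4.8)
The plan is to prove the two halves of the decomposition separately: that $NA_n$ and $\D A_n$ together span $A_n$, and that they intersect trivially. The case $n=0$ is immediate, since $NA_0 = A_0$ and $\D A_0 = 0$, so assume $n \geq 1$. The spanning statement $A_n = NA_n + \D A_n$ is the substantial half: it amounts to showing that any $n$-cube $x$ can be corrected, by subtracting degenerate and negatively-connected cubes, until every face except $\face{}{n,0}$ vanishes. I would carry this out in two phases, first killing the $\varepsilon = 1$ faces with degeneracies and then the remaining $\varepsilon = 0$ faces (other than $\face{}{n,0}$) with negative connections.

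In the first phase I process $i$ from $n$ down to $1$ and subtract $(x\face{}{i,1})\degen{}{i}$ from the running cube $x$; since $\degen{}{i}\face{}{i,1} = \id$ this annihilates $\face{}{i,1}$. The cubical identity $\face{}{i,1}\face{}{k,1} = \face{}{k+1,1}\face{}{i,1}$ for $i \leq k$ guarantees that this correction does not resurrect any $\face{}{k,1}$ with $k > i$ that was killed earlier, because the coefficient $x\face{}{i,1}$ then already has vanishing $\face{}{k,1}$-faces and the degeneracy commutes past higher faces. After this phase every $\face{}{i,1}$ is zero.

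In the second phase I process $i$ from $1$ up to $n-1$ and subtract $(x\face{}{i,0})\conn{}{i,0}$; since $z\conn{}{i,0}$ has $\face{}{i,0} = z$, this annihilates $\face{}{i,0}$. The essential point, and the reason connections are indispensable here, is that a single degeneracy $\degen{}{i}$ cannot distinguish $\face{}{i,0}$ from $\face{}{i,1}$, whereas the $\varepsilon = 1$ residues of $z\conn{}{i,0}$ are all of the form $(z\face{}{l,1})\degen{}{i}$ or $(z\face{}{l,1})\conn{}{i,0}$; using $\face{}{i,0}\face{}{l,1} = \face{}{l+1,1}\face{}{i,0}$ these vanish precisely because the $\varepsilon = 1$ faces were already killed in the first phase. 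The analogous commutation of $\conn{}{i,0}$ past lower faces, together with the fact that the $\varepsilon = 0$ faces below $i$ are already zero, shows that no previously-annihilated face is disturbed. The terminal cube therefore lies in $NA_n$ and differs from $x$ by a sum of degeneracies and negative connections, i.e.\ by an element of $\D A_n$.

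For the intersection, I would observe that this elimination defines a natural linear endomorphism $\nu$ of $A_n$ with $\nu(x) \in NA_n$ and $x - \nu(x) \in \D A_n$; moreover $\nu$ restricts to the identity on $NA_n$, since for a normalized input every correction coefficient is already zero. Hence $\nu$ is idempotent with image $NA_n$, giving $A_n = NA_n \oplus \ker\nu$ with $\ker\nu \subseteq \D A_n$. It then remains to see that $\nu$ annihilates $\D A_n$, equivalently that $NA_n \cap \D A_n = 0$, which I would prove by induction on $n$ (the cases $n \leq 1$ being immediate) by applying the face maps to a putative nonzero degenerate element of $NA_n$ and peeling off one degeneracy or connection coefficient at a time via the cubical identities. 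I expect the main obstacle to be the bookkeeping in the spanning step, namely verifying rigorously that each correction leaves all previously-annihilated faces untouched; this is exactly where connections, rather than degeneracies alone, are used in an essential way, and where the argument diverges from its simplicial model (Goerss--Jardine, Theorem III.2.1), since there are now two face maps per coordinate direction rather than one. The companion difficulty is the annihilation computation establishing directness, whose inductive step again rests on the commutation identities between faces, degeneracies, and connections.
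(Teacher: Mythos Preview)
Your approach is correct but genuinely different from the paper's. The paper does not build an explicit normalization operator; instead it introduces a double filtration
\[
B_iA_n=\bigcap_{j\le n-i}\ker\face{}{j,1},\qquad
C_iA_n=B_0A_n\cap\bigcap_{j\le n-i}\ker\face{}{j,0},
\]
and observes that $\face{}{n-i,1}\colon B_{i+1}A_n\to B_iA_{n-1}$ is split by $\degen{}{n-i}$, while $\face{}{n-i,0}\colon C_{i+1}A_n\to C_iA_{n-1}$ is split by $\conn{}{n-i,0}$. Iterating these split short exact sequences yields directly
\[
A_n\;\cong\;\bigoplus_{i}NA_i^{\oplus|\boxcat_{\degen{}{},\conn{}{}-}([1]^n,[1]^i)|},
\]
with the non-top summands identified with $\D A_n$ via the normal form theorem. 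So spanning and trivial intersection come out simultaneously; no separate ``$NA_n\cap\D A_n=0$'' argument is needed. Your method trades this structural bookkeeping for an explicit idempotent $\nu$, which is more elementary and closer in spirit to the classical simplicial argument.

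The one place where your outline is thinner than it should be is the intersection step. ``Peeling off coefficients by applying face maps'' is not straightforward here, because the expression $w=\sum a_j\degen{}{j}+\sum b_j\conn{}{j,0}$ is not unique, so there is no well-defined coefficient to peel. The clean way to finish along your lines is to show directly that $\nu$ kills each generator of $\D A_n$. For degeneracies this is pleasant: using $\degen{}{j}\degen{}{k}=\degen{}{k-1}\degen{}{j}$ for $j<k$ one checks that $P_k(a\degen{}{j})=(P_{k-1}^{A_{n-1}}a)\degen{}{j}$ for $k>j$, so the $\degen{}{j}$-form persists through $P_n,\dots,P_{j+1}$ and then $P_j$ annihilates. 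For connections the analogous commutation $P_k(b\conn{}{j,0})=(P_{k-1}^{A_{n-1}}b)\conn{}{j,0}$ holds only for $k>j+1$; at $k=j+1$ an extra degenerate correction appears, which is then absorbed by the remaining $P_k$'s before the $Q$-phase handles the surviving connection term. This works, but is more involved than your sketch suggests, and is exactly the bookkeeping the paper's split-exact-sequence argument sidesteps.
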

\begin{proof}
For $i \in \{ 0, \dots, n \}$, define subgroups $B_i A_n, C_i A_n \subseteq A_n$ by
\begin{align*}
    B_i A_n &:= \bigcap\limits_{j=1}^{n-i} \ker \face{}{j,1} \\
    C_i A_n &:= B_0 A_n \cap \left( \bigcap\limits_{j=1}^{n-i} \ker \face{}{j,0} \right),
\end{align*}
where $B_n A_n$ denotes the entire group $A_n$ and $C_n A_n$ denotes the subgroup $B_0 A_n$.
In particular, $B_0 A_0 = C_0 A_0 = A_0$ and $C_1 A_n = NA_n$.

For $i \in \{ 1, \dots, n-1 \}$, cubical identities imply the $\face{}{n-i,0}$-face map restricts to a split epimorphism $\restr{\face{}{n-i,0}}{C_{i+1} A_n} \from C_{i+1} A_n \to C_i A_{n-1}$ with pre-inverse $\restr{\conn{}{n-i,0}}{C_i A_{n-1}} \from C_i A_{n-1} \to C_{i+1} A_n$.
With this, the sequence
\[
    0 \xrightarrow{\phantom{\face{}{n-i,0}}} C_{i} A_n \xrightarrow{\phantom{\face{}{n-i,0}}} C_{i+1} A_n \xrightarrow{\face{}{n-i,0}} C_i A_{n-1} \xrightarrow{\phantom{\face{}{n-i,0}}} 0
\]
is a short exact sequence which is right split.
By the splitting lemma, $C_{i+1} A_n \cong C_i A_n \oplus C_i A_{n-1}$.
An induction argument combined with \cref{thm:normal_form} gives
\[ B_0 A_n = C_n A_n \cong NA_n \oplus \left(\bigoplus\limits_{i=1}^{n-1} NA_i^{\oplus |\boxcat_{\conn{}{}-}([1]^n, [1]^i)|} \right), \]
where $|\boxcat_{\conn{}{}-}([1]^n, [1]^i)|$ is the number of maps from $[1]^n \to [1]^i$ in $\boxcat$ generated by negative connection maps.
(In particular, \cref{thm:normal_form} allows us to identify the inclusion given abstractly by the splitting lemma with the explicit construction taking $x$ in the summand corresponding to $\alpha \in \boxcat_{\conn{}{}-}([1]^n, [1]^i)$ to $x \alpha$.)
An explicit formula for this is
\[ |\boxcat_{\conn{}{}-}([1]^n, [1]^i)| = \begin{pmatrix} n-1 \\ n-i \end{pmatrix}. \]

For $i \in \{ 0, \dots, n-1 \}$, cubical identities imply the $\face{}{n-i,1}$-face map restricts to a split epimorphism $\restr{\face{}{n-i,1}}{B_{i+1} A_n} \from B_{i+1} A_n \to B_i A_{n-1}$ with pre-inverse $\restr{\degen{}{n-i}}{B_i A_{n-1}} \from B_i A_{n-1} \to B_{i+1} A_n$.
This implies the sequence
\[
    0 \xrightarrow{\phantom{\face{}{n-i,1}}} B_{i} A_n \xrightarrow{\phantom{\face{}{n-i,1}}} B_{i+1} A_n \xrightarrow{\face{}{n-i,1}} B_i A_{n-1} \xrightarrow{\phantom{\face{}{n-i,1}}} 0
\]
is a split short exact sequence, thus $B_{i+1} A_n \cong B_i A_n \oplus B_i A_{n-1}$.
By induction and \cref{thm:normal_form},
\[ A_n = B_n A_n \cong \bigoplus\limits_{i=1}^n NA_i^{\oplus |\boxcat_{\degen{}{}, \conn{}{}-}([1]^n, [1]^i)|}, \]
where $|\boxcat_{\degen{}{}, \conn{}{}-}([1]^n, [1]^i)|$ is the number of maps $[1]^n \to [1]^i$ in $\boxcat$ generated by degeneracy and negative connection maps.
(Once again, \cref{thm:normal_form} allows us to identify the inclusion given abstractly by the splitting lemma with the explicit construction taking $x$ in the summand corresponding to $\alpha \in \boxcat_{\degen{}{}, \conn{}{}-}([1]^n, [1]^i)$ to $x \alpha$.)
An explicit formula for this is
\[ |\boxcat_{\degen{}{}, \conn{}{}-}([1]^n, [1]^i)| = \sum\limits_{j=0}^{n-i} \begin{pmatrix} n \\ j \end{pmatrix} \begin{pmatrix} n-j-1 \\ n-j-k \end{pmatrix}. \]
In particular,
\[ A_n \cong NA_n \oplus \left( \bigoplus\limits_{i=1}^{n-1} NA_i^{\oplus |\boxcat_{\degen{}{}, \conn{}{}-}([1]^n, [1]^i)|} \right). \]

By construction, the image of the right summand inclusion $\bigoplus\limits_{i=1}^{n-1} NA_i^{\oplus |\boxcat_{\degen{}{}, \conn{}{}-}([1]^n, [1]^i)|} \ito A_n$ is contained in subgroup $\D A_n$.
It suffices to show the reverse inclusion.
Given $x \in A_{n-1}$, we may write
\[ x = y + \sum\limits_{i=1}^{n-2} \sum\limits_{\alpha \in \boxcat_{\degen{}{}, \conn{}{}-}([1]^{n-1}, [1]^i)} y_{\alpha} \alpha,  \]
where $y \in NA_{n-1}$ and $y_\alpha \in NA_i$ for each $\alpha \in \boxcat_{\degen{}{}, \conn{}{}-}([1]^{n-1}, [1]^i)$.
For $j \in \{ 1, \dots, n \}$, we have that $x\degen{}{j} = \sum y_\alpha \alpha \degen{}{j}$, which shows that $x \sigma_j$ indeed lies in the image of the inclusion $\bigoplus\limits_{i=1}^{n-1} NA_i^{\oplus |\boxcat_{\degen{}{}, \conn{}{}-}([1]^n, [1]^i)|} \ito A_n$, since each $y_\alpha$ is an element of some $NA_i$.
A similar argument shows that, for $k \in \{ 1, \dots, n-1 \}$, the $n$-cube $x\conn{}{k} \in A_n$ also lies in the image of the inclusion. 
\end{proof}
\begin{remark} \label{rmk:na_plus_da}
    For cubical sets with positive connections, an altered definition of $NA_\bullet$ which allows the $\face{}{n,1}$-face to be non-zero (and not the $\face{}{n,0}$-face) yields the analogous result
    \[ A_n \cong NA_n \oplus (\D[\degen{}{}]A_n + \D[\conn{}{}+] A_n). \]
\end{remark}

Using \cref{thm:na_oplus_da}, we may prove \cref{thm:pi_n_h_n}.
\begin{proof}[Proof of \cref{thm:pi_n_h_n}]
    Applying the second isomorphism theorem to \cref{thm:na_oplus_da} gives a chain complex isomorphism
    \[ A_\bullet / \D A_\bullet \cong NA_\bullet. \]
    This statement then follows from \cref{thm:pi_n_h_n_na}.
\end{proof}
\begin{remark} \label{rmk:counter-example}
    The proof of \cref{thm:pi_n_h_n} relies on quotienting by one connection when defining the Moore complex (as mentioned in \cref{rmk:def}).
    In the case of quotienting by positive connections, an analogous proof applies (\cref{rmk:na_plus_da}).
    When quotienting by both connections, one may want that
    \[ A_n \cong NA_n \oplus (\D[\degen{}{}]A_n + \D[\conn{}{}-] A_n + \D[\conn{}{}+] A_n). \]
    However, this is false since the intersection $NA \cap (\D[\degen{}{}] A_n + \D[\conn{}{}-] A_n + \D[\conn{}{}+] A_n)$ may be non-empty.
    This is the case for $A = F\cube{1} / F\{ 0 \}$, since $\sigma^2_1 + \sigma^2_2 - \conn{2}{1,1}-\conn{2}{1,0} \in (F\cube{1}/F\{ 0 \})_2$ is a square whose faces are all zero.
    Note that the Hurewicz theorem (\cref{thm:hurewicz1,thm:hurewicz}) still holds in this case since the associated homologies are isomorphic (\cref{rmk:iso}).
\end{remark}

Having established these facts, we can now move towards defining the Hurewicz homomorphism and proving the Hurewicz theorem.
The key ingredient in our approach is a natural transformation $\alpha \colon UF\loopsp \to \loopsp UF$ which we now define.

For a pointed Kan complex $(X,x)$, define a map $\alpha_X \from UF\loopsp X \to \loopsp UF X$ by regarding a linear combination $\sum a_i f_i$ of $n$-cubes $f_i \in (\loopsp X)_n$ as a linear combination of $(n+1)$-cubes of $X$ whose $\face{}{1,0}$- and $\face{}{1,1}$-face are degenerate at $x = 0$.
The map $\alpha_X$ satisfies the following properties.

\begin{proposition} \label{thm:alpha_eta_triangle}
    Let $X$ be a pointed Kan complex. 
    \begin{enumerate}
        \item The map $\alpha_X \from UF\loopsp X \to \loopsp UF X$ is natural in $X$.
        \item The map $\alpha_X \from UF\loopsp X \to \loopsp UFX$ is a monomorphism.
        \item The triangle
        \[ \begin{tikzcd}
            {} & UF\loopsp X \ar[dr, "\alpha_X"] & \\
            \loopsp X \ar[ur, "\eta_{\loopsp X}"] \ar[rr, "\loopsp \eta"] && \loopsp UFX
        \end{tikzcd} \]
        commutes.
    \end{enumerate}
\end{proposition}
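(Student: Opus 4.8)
The plan is to establish all three statements by unwinding the definitions on individual cubes, relying on two bookkeeping facts about the loop space: an $n$-cube of $\loopsp Z$ is by definition an $(n+1)$-cube $w$ of $Z$ with $w\face{}{1,0} = w\face{}{1,1} = z\ndegen$, and a cubical operator in position $j$ of $\loopsp Z$ is the operator in position $j+1$ of $Z$. Before proving (1)--(3), I would first confirm that $\alpha_X$ is a well-defined based cubical map. On an $n$-cube it carries a sum $\sum a_i f_i$ of $n$-cubes $f_i$ of $\loopsp X$ --- each an $(n+1)$-cube of $X$ with $f_i\face{}{1,0} = f_i\face{}{1,1} = x\ndegen$ --- to the same formal sum viewed in $(FX)_{n+1}$. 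Since $x\ndegen$ is the basepoint of $X_n$ and hence $0$ in $FX$, each summand has vanishing $\face{}{1,0}$- and $\face{}{1,1}$-faces in $FX$, so $\sum a_i f_i$ indeed lies in $(\loopsp UFX)_n$; the index-shift identity then shows $\alpha_X$ commutes with every cubical operator, and because the basepoint generator of $\loopsp X$ (the degenerate cube $x\ndegen[n+1]$) maps to the basepoint $0$ of $X_{n+1}$, the assignment descends to the quotient by the basepoint and is a based map.

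For naturality (1), given a based map $g \colon X \to Y$ of Kan complexes I would evaluate the naturality square on a basis $n$-cube $f$ of $\loopsp X$: both composites send $f$ to $g(f) \in (FY)_{n+1}$ --- one route via $\alpha_Y \circ UF\loopsp g$, the other via $\loopsp UFg \circ \alpha_X$ --- using functoriality of $F$ and the fact that $\loopsp g$ is computed by applying $g$ to the underlying $(n+1)$-cube. For the triangle (3), I would again evaluate on a basis $n$-cube $f$: the unit $\eta_{\loopsp X}$ sends $f$ to its generator in $F\loopsp X$ and $\alpha_X$ returns $f \in (FX)_{n+1}$, whereas $\loopsp \eta_X$ applies the unit $\eta_X$ to the underlying $(n+1)$-cube, also yielding $f \in (FX)_{n+1}$; here I would note that $\loopsp \eta_X$ is defined because $UFX$, being a cubical abelian group, is a Kan complex. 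Both (1) and (3) thus reduce to immediate diagram chases on generators.

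For the monomorphism (2), I would work levelwise, which suffices since a map of pointed cubical sets is a monomorphism exactly when each component on $X_n$ is injective. At level $n$, $(UF\loopsp X)_n$ is the free abelian group on $(\loopsp X)_n$ modulo its basepoint, while $(\loopsp UFX)_n$ sits inside $(FX)_{n+1}$, the free abelian group on $X_{n+1}$ modulo its basepoint. Since $(\loopsp X)_n \hookrightarrow X_{n+1}$ is an inclusion of sets sending basepoint to basepoint, $\alpha_X$ takes distinct non-basepoint generators to distinct non-basepoint generators, so the induced homomorphism of free abelian groups is injective.

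The genuinely delicate step is the preliminary one: verifying that $\alpha_X$ is a well-defined based cubical map, i.e.\ that the dimension shift is handled correctly, that the relevant faces really vanish in $FX$, and that compatibility with faces, degeneracies, and connections all follows from the position-$j$-to-position-$(j+1)$ correspondence. Once this is in place, (1) and (3) amount to tracking a single generator through the definitions, and (2) is the standard fact that an injection of generating sets induces a monomorphism of free abelian groups.
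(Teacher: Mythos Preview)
Your proposal is correct and follows essentially the same approach as the paper: the paper dismisses (1) and (3) as ``clear by definition'' and handles (2) by noting that the basis cubes $(\loopsp X)_n$ map to linearly independent elements of $UFX$, which is exactly your argument via the inclusion of generating sets. Your version is considerably more explicit, in particular by spelling out the preliminary well-definedness check for $\alpha_X$ that the paper leaves implicit.
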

\begin{proof}
  Items 1 and 3 are clear by definition.
  For item 2, observe that the image of the basis cubes $(\loopsp X)_n \subseteq (UF \loopsp X)_n$ under $(\alpha_X)_n \from (UF\loopsp X)_n \to (\loopsp UFX)_n$ is a set of linearly-indepenedent $(n+1)$-cubes of $UFX$. 
\end{proof}

\begin{proposition} \label{thm:alpha_iso}
    Let $X$ be a Kan complex.
    If, for some $n \geq 0$ and $x \in X_0$, we have that $X$ has a single $k$-cube $X_k = \{ x\ndegen[k] \}$ for all $0 \leq k \leq n$ then the map 
    \[ (\alpha_X)_k \from (UF \loopsp X)_k \to (\loopsp UFX)_k  \]
    is a bijection for $0 \leq k \leq n+1$.
\end{proposition}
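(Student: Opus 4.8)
The plan is to leverage that $(\alpha_X)_k$ is already a monomorphism for every $k$ by \cref{thm:alpha_eta_triangle}, so that the entire content of the statement is surjectivity of $(\alpha_X)_k$ in the range $0 \le k \le n+1$. I would unwind both sides inside $(FX)_\bullet$: an element of $(\loopsp UFX)_k$ is a chain $c \in (FX)_{k+1}$ with $c\face{}{1,0} = c\face{}{1,1} = 0$, whereas the image of $(\alpha_X)_k$ is the subgroup of $(FX)_{k+1}$ generated by those $(k+1)$-cubes $f$ of $X$ that are loops, i.e. satisfy $f\face{}{1,0} = f\face{}{1,1} = x\ndegen$. Thus surjectivity amounts to showing that every such $c$ is a $\Z$-linear combination of loop cubes.

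First I would dispose of the degrees $0 \le k \le n$. Since $X_k = \{x\ndegen[k]\}$, the group $(FX)_k$ is trivial, so the face maps $\face{}{1,0}, \face{}{1,1} \from (FX)_{k+1} \to (FX)_k$ vanish and hence $(\loopsp UFX)_k = (FX)_{k+1}$. For the same reason every $(k+1)$-cube of $X$ has both of its direction-$1$ faces equal to $x\ndegen[k]$ and is therefore a loop, so $(UF\loopsp X)_k = (FX)_{k+1}$ and $(\alpha_X)_k$ is literally the identity. This settles every degree except $k = n+1$.

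The case $k = n+1$ is where I expect the real difficulty to concentrate. Now $\face{}{1,0}$ and $\face{}{1,1}$ take values in the non-trivial group $(FX)_{n+1}$, so a chain $c = \sum_j b_j z_j \in (FX)_{n+2}$ may have cancelling, rather than individually vanishing, direction-$1$ faces. The faces $z_j\face{}{1,\varepsilon}$ are $(n+1)$-cubes all of whose own faces lie in $X_n = \{x\ndegen[n]\}$, so they are loops, and the cubical identities combined with this triviality yield the useful relations $(p\conn{}{1,0})\face{}{1,0} = p$, $(p\conn{}{1,0})\face{}{1,1} = x\ndegen$, and symmetrically $(p\conn{}{1,1})\face{}{1,1} = p$, $(p\conn{}{1,1})\face{}{1,0} = x\ndegen$. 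I would try to use Kan fillers guided by these connection cubes to deform each $z_j$ into a loop, subtracting explicit correction cubes. The main obstacle is that the naive additive corrections $\sum_j b_j (z_j\face{}{1,0})\conn{}{1,0}$ collapse to zero precisely because $\sum_j b_j z_j\face{}{1,0} = 0$, so the cancellations must instead be realised by genuinely cube-by-cube Kan fillings; reconciling these non-canonical, non-additive fillings with the additive statement — so that all non-loop terms cancel in $(FX)_{n+2}$ — is the step that will require the most care, presumably using the Kan condition on $X$ together with the standard-form description of \cref{thm:normal_form} to keep the bookkeeping under control.
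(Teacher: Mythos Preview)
Your treatment of injectivity (via \cref{thm:alpha_eta_triangle}) and of the degrees $0 \le k \le n$ is correct and matches the paper. The divergence is entirely at $k = n+1$, and here the paper's route is quite different from what you propose: it is purely algebraic and never invokes the Kan condition on $X$.

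For $k = n+1$ the paper applies the splitting of \cref{thm:na_oplus_da} to the cubical abelian group $FX$. Given $c \in (\loopsp UFX)_{n+1} \subseteq (FX)_{n+2}$, write $c$ as a normalized piece in $N(FX)_{n+2}$ plus a degenerate piece $\sum_i Z_i\degen{}{i} + \sum_i W_i\conn{}{i,0}$ with $Z_i, W_i \in (FX)_{n+1}$. Every face of the normalized piece other than $\face{}{n+2,0}$ vanishes by definition of $N$, so the hypothesis $c\face{}{1,\varepsilon} = 0$ forces the degenerate piece to have vanishing $\face{}{1,\varepsilon}$-faces as well. The cubical identities together with $(FX)_n = 0$ kill all $i \ge 2$ contributions to these faces, and the two remaining equations then force $Z_1 = W_1 = 0$. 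For $i \ge 2$ and any $(n+1)$-cube $z$ of $X$ one has $(z\degen{}{i})\face{}{1,\varepsilon} = (z\face{}{1,\varepsilon})\degen{}{i-1}$, and since $z\face{}{1,\varepsilon} \in X_n = \{x\ndegen[n]\}$ this equals $x\ndegen[n+1]$; likewise for $z\conn{}{i,0}$. Thus every basis cube occurring in the surviving degenerate piece is already a loop, and the paper concludes from there.

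Your Kan-filler route, by contrast, runs into the very obstruction you flag, and I do not see how to close it along those lines. A Kan filler produces a new cube of $X$, hence a new \emph{basis} element of $(FX)_{n+2}$; it cannot create a \emph{relation} among existing basis elements of this free abelian group. But surjectivity of $(\alpha_X)_{n+1}$ is an equality in $(FX)_{n+2}$, not a homotopy: you need $c$ to literally lie in the $\Z$-span of the loop basis cubes. If $u, v \in X_{n+2}$ are distinct non-loop cubes with $u\face{}{1,\varepsilon} = v\face{}{1,\varepsilon}$ for $\varepsilon = 0,1$, then $u - v$ lies in $(\loopsp UFX)_{n+1}$, yet no filler in $X$ will rewrite $u - v$ inside the free group --- fillers add generators, they do not impose relations. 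So the ``cube-by-cube'' filling strategy is not merely delicate bookkeeping; it is aiming at the wrong kind of identity. The paper sidesteps this entirely by never manipulating the individual basis cubes $x_i$, working instead with the $N \oplus \D$ decomposition of $c$ as a single element of the abelian group $FX$.
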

\begin{proof}
    The map $\alpha_X$ is monic by \cref{thm:alpha_eta_triangle}, thus it suffices to show surjectivity.

    For $k \leq n-1$, this is immediate since $(\loopsp UFX)_k$ is a singleton set.
    For $k = n$, this holds since $(\loopsp X)_n$ contains all $(n+1)$-cubes of $X$ by assumption.

    For $k = n+1$, fix $\sum\limits_{i=1}^m a_i x_i \in (\loopsp UFX)_{n+1}$.
    By definition of the loop space, $\sum\limits_{i=1}^m a_i x_i$ is an $(n+2)$-cube of $FX$ whose $\face{}{1,0}$- and $\face{}{1,1}$-faces are 0.
    We apply \cref{thm:na_oplus_da} and write
    \[ \sum\limits_{i=1}^m a_i x_i = \sum\limits_{i=1}^{m_1} b_i y_i + \sum\limits_{i=1}^{m_2} c_i z_i \degen{}{i} + \sum\limits_{i=1}^{m_3} d_i w_i \conn{}{i,0}, \]
    where $\sum\limits_{i=1}^{m_1} b_i y_i \in N(FX)_{n+2}$ and $\sum\limits_{i=1}^{m_2} c_i z_i \degen{}{i} + \sum\limits_{i=1}^{m_3} d_i w_i \conn{}{i,0} \in \D (FX)_{n+2}$.
    The $\face{}{1,0}$- and $\face{}{1,1}$-faces of $\sum\limits_{i=1}^{m_1} b_i y_i$ are 0 by assumption.
    Thus,
    \[ 0 = \sum\limits_{i=1}^{m_2} c_i z_i \degen{}{i} \face{}{1,0} + \sum\limits_{i=1}^{m_3} d_i w_i \conn{}{i,0} \face{}{1,0} = \sum\limits_{i=1}^{m_2} c_i z_i \degen{}{i} \face{}{1,1} + \sum\limits_{i=1}^{m_3} d_i w_i \conn{}{i,0} \face{}{1,1}. \]
    For $i \geq 2$, cubical identities give that 
    \[ z_i \degen{}{i}\face{}{1,0} = z_i \face{}{1,0} \degen{}{i-1} = 0 \degen{}{i-1} = 0. \] 
    A similar calculation gives that $z_i \degen{}{i} \face{}{1,1}, w_i \conn{}{i,0}\face{}{1,0}, w_i \conn{}{i,0} \face{}{1,1} = 0$ for $i \geq 2$.
    Thus, we have
    \[ b_1 y_1 \degen{}{1} \face{}{1,0} + c_1 z_1 \conn{}{1,0} \face{}{1,0} = 0 = b_1 y_1 \degen{}{1} \face{}{1,1} + c_1 z_1 \conn{}{1,0} \face{}{1,1}. \]
    Applying cubical identities to the equality on the right gives $z_1 = 0$, thus $y_1 = 0$.
    That is, the $\face{}{1,0}$- and $\face{}{1,1}$-faces of $\sum\limits_{i=1}^{m_2} c_i z_i \degen{}{i} + \sum\limits_{i=1}^{m_3} d_i w_i \conn{}{i,0}$ are 0.
    Therefore, $\sum\limits_{i=1}^m a_i x_i \in (UF\loopsp X)_{n+1}$.
\end{proof}
\begin{corollary} \label{thm:alpha_conn}
    Let $X$ be a pointed Kan complex.
    For $n \geq 1$, if $X$ is $n$-connected then
    \[ \alpha_X \from UF \loopsp X \to \loopsp UFX \]
    is $n$-connected. 
\end{corollary}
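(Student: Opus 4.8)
The plan is to reduce the corollary to the strictly reduced case already established in \cref{thm:alpha_iso}. There the hypothesis is combinatorial---$X$ has a single cube in each dimension $\le n$---and the conclusion is strong: $\alpha_X$ is a \emph{bijection} on cubes in every dimension $\le n+1$. Here the hypothesis has been weakened to the homotopical condition that $X$ be $n$-connected, so I would replace $X$ by a homotopy equivalent complex to which \cref{thm:alpha_iso} applies and then transport the conclusion back along the equivalence using naturality of $\alpha$.

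First I would record that a cubical map $\phi$ which is bijective on cubes in every dimension $\le n+1$ is automatically $n$-connected. Since $\pi_k = \pi_0\loopsp[k]$ and the $0$- and $1$-cubes of $\loopsp[k](-)$ are precisely (constrained) $k$- and $(k+1)$-cubes of the underlying complex, the group $\pi_k$ depends only on cubes of dimensions $k$ and $k+1$. Thus $\phi$ induces isomorphisms on $\pi_k$ for all $k \le n$ and a surjection on $\pi_{n+1}$ (it is already surjective on $(n+1)$-cubes). Combined with \cref{thm:alpha_iso}, this shows: whenever a pointed Kan complex $X'$ satisfies $X'_k = \{x\ndegen[k]\}$ for all $k \le n$, the map $\alpha_{X'}$ is $n$-connected.

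Next I would invoke a reduced model of $X$: as $X$ is $n$-connected, there is a pointed Kan complex $X'$ with $X'_k = \{x\ndegen[k]\}$ for $k \le n$ together with a pointed homotopy equivalence $X' \weto X$ (a minimal complex, or any $n$-reduced replacement). Naturality of $\alpha$ (\cref{thm:alpha_eta_triangle}) yields a commuting square with horizontal maps $\alpha_{X'}, \alpha_X$ and vertical maps $UF\loopsp(-)$ and $\loopsp UF(-)$ applied to this equivalence. Both verticals become isomorphisms after applying $\pi_k$: for $\loopsp UF(-)$ one uses $\pi_k\loopsp UF = \pi_{k+1}UF$ together with the fact that $UF$ sends the equivalence to a weak equivalence (via the natural isomorphism $\pi_k(UF(-)) \cong \pH_k(-)$ from \cref{thm:pi_n_h_n} and the homology-invariance in \cref{thm:homology_htpy_equiv_iso}); for $UF\loopsp(-)$ one additionally uses that $\loopsp$ preserves homotopy equivalences. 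The square on $\pi_k$ then has invertible verticals, so $\pi_k(\alpha_X)$ inherits the behaviour of $\pi_k(\alpha_{X'})$: isomorphism for $k \le n$ and epimorphism for $k = n+1$. Hence $\alpha_X$ is $n$-connected.

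I expect the main obstacle to be the existence of the reduced model $X' \weto X$: converting the homotopical hypothesis of $n$-connectivity into the strict combinatorial hypothesis of \cref{thm:alpha_iso} requires a minimal-complex (or $n$-reduced replacement) theorem for cubical Kan complexes, which I would import from the companion development of cubical homotopy theory. By comparison, checking that $UF\loopsp(-)$ and $\loopsp UF(-)$ carry homotopy equivalences to weak equivalences is routine given \cref{thm:pi_n_h_n,thm:homology_htpy_equiv_iso}, though it must be arranged so that the naturality-square argument goes through for weak rather than strong equivalences.
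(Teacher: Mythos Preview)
Your proposal is correct and follows essentially the same route as the paper: reduce to the strictly reduced case handled by \cref{thm:alpha_iso}, then pass from an arbitrary $n$-connected $X$ to a reduced model via a homotopy equivalence and transport the conclusion along the naturality square for $\alpha$. The paper is terser---it constructs the reduced model concretely as the maximal subcomplex $Y\subseteq X$ whose low-dimensional cubes are all degenerate at the basepoint, and cites \cite[Thm.~4.10]{carranza-kapulkin:homotopy-cubical} for the equivalence $Y\weto X$, leaving the naturality transfer implicit---whereas you spell out the transfer via $\pi_k UF(-)\cong\pH_k(-)$ and \cref{thm:homology_htpy_equiv_iso}, and are less specific about where the reduced model comes from. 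Your identification of the ``main obstacle'' is exactly the point the paper resolves by citing the companion paper.
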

\begin{proof}
    When $X$ has a single $k$-cube for all $0 \leq k \leq n$, this follows by \cref{thm:alpha_iso}.
    For an arbitrary $X$, consider the maximal subcomplex $Y \subseteq X$ which has a single $k$-cube for all $0 \leq k \leq n-1$.
    That is,
    \[ Y_k := \begin{cases}
        \{ x\ndegen[k] \} & k < n \\
        \{ y \in X_k \mid y\face{k}{i_1}\dots\face{n}{i_{k-n}} \in X_{n-1} \text{ for all } \face{k}{i_1}\dots\face{n}{i_{k-n}} \from \cube{n} \to \cube{k} \} & k \geq n.
    \end{cases} \]
    This subcomplex is homotopy equivalent to $X$ by \cite[Thm.~4.10]{carranza-kapulkin:homotopy-cubical} and has a single $k$-cube for all $0 \leq k \leq n$.
\end{proof}

\begin{definition} \label{def:Hurewicz-map}
For a pointed Kan complex $X$, the \emph{Hurewicz homomorphism} is the composite
\[ \pi_n X \xrightarrow{\pi_n \eta_X} \pi_n UFX \xrightarrow{\cong} \pH_n X  \]
of $\pi_n \from \cSet_* \to \Grp$ applied to the unit map $\eta_X \from X \to UFX$ with the isomorphism in \cref{thm:pi_n_h_n}.
\end{definition}

We now prove the Hurewicz theorem for the case of ($0$-)connected cubical Kan complexes.
\begin{theorem} \label{thm:hurewicz1}
    For a ($0$-)connected pointed Kan complex $(X,x)$, the Hurewicz homomorphism
    \[ \pi_1(X, x) \to \pH_1(X, x) \]
    is surjective.
    Moreover, its kernel is the commutator $[\pi_1(X, x), \pi_1(X, x)]$.
\end{theorem}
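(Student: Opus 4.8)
The plan is to prove the Hurewicz theorem for the fundamental group by exploiting the natural transformation $\alpha_X \from UF\loopsp X \to \loopsp UFX$ and reducing everything to $\pi_0$ of loop spaces. The Hurewicz homomorphism $\pi_1 X \to \pH_1 X$ is, by \cref{def:Hurewicz-map}, the map $\pi_1 \eta_X$ followed by the isomorphism $\pi_1 UFX \cong \pH_1 X$ of \cref{thm:pi_n_h_n}. Since the latter is an isomorphism, I would prove the two claims about the Hurewicz map by instead proving them about $\pi_1 \eta_X \from \pi_1 X \to \pi_1 UFX$: that it is surjective with kernel exactly the commutator subgroup $[\pi_1 X, \pi_1 X]$.

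First I would unwind $\pi_1 \eta_X$ at the level of loop spaces. Recall $\pi_1 X = \pi_0 \loopsp X$ and $\pi_1 UFX = \pi_0 \loopsp UFX$, and by naturality the relevant map is $\pi_0 (\loopsp \eta_X)$. Using \cref{thm:alpha_eta_triangle}(3), the map $\loopsp \eta_X \from \loopsp X \to \loopsp UFX$ factors as $\alpha_X \circ \eta_{\loopsp X}$, so $\pi_0(\loopsp \eta_X) = \pi_0(\alpha_X) \circ \pi_0(\eta_{\loopsp X})$. Here the key reduction is that $X$ is $0$-connected, so $\loopsp X$ is still connected enough that \cref{thm:alpha_iso} (with $n = 0$, after passing to the homotopy-equivalent subcomplex as in \cref{thm:alpha_conn}) tells us $\alpha_X$ induces an isomorphism on $\pi_0$; indeed $\alpha_X$ being $0$-connected means $\pi_0 \alpha_X$ is a bijection. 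Thus surjectivity of $\pi_1 \eta_X$ reduces to surjectivity of $\pi_0(\eta_{\loopsp X}) \from \pi_0 \loopsp X \to \pi_0 UF \loopsp X$, and more importantly its kernel coincides with the kernel of $\pi_0 \eta_{\loopsp X}$.

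The heart of the argument is therefore to analyze the map $\pi_0(\eta_{Y}) \from \pi_0 Y \to \pi_0 UFY$ for a connected pointed Kan complex $Y$ (here $Y = \loopsp X$), where $\eta_Y$ is the unit of the free-forgetful adjunction. By \cref{thm:pi_n_h_n} applied with $n = 1$, $\pi_0 UFY \cong \pi_1 UFY$-style reasoning identifies $\pi_0 UFY$ with $H_0$ of the relevant Moore complex, which is the free abelianization: concretely, $UFY_0$ is the free abelian group on $0$-cubes of $Y$ modulo the basepoint, and $\pi_0 UFY$ is its quotient by the image of the boundary, giving the abelian group $\tilde{H}_0$-style object that is the \emph{abelianization} of $\pi_0 Y = \pi_1 X$. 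For surjectivity: every element of $\pi_0 UFY$ is represented by a linear combination of $0$-cubes, and concatenation in the group $\pi_1 X$ together with \cref{thm:pi_n_a_n_mul} (relating the additive structure $[x+y]$ to the multiplicative $[x][y]$) lets me realize any such combination as a single concatenated loop in the image. For the kernel: I would show directly that $\eta$ sends a product $[u][v]$ to the sum $[u] + [v]$ in the free abelian group, so the map factors through the abelianization $\pi_1 X \to \pi_1 X / [\pi_1 X, \pi_1 X]$; injectivity of the induced map on the abelianization then identifies the kernel with the commutator subgroup.

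\textbf{The main obstacle} will be pinning down that the kernel is \emph{exactly} the commutator subgroup rather than merely containing it. Containment of $[\pi_1 X, \pi_1 X]$ in the kernel is the easy direction, following from commutativity of the target (the target is abelian, so commutators die). The reverse containment — that nothing outside the commutator subgroup is killed — requires showing the induced map on the abelianization $\pi_1^{\mathrm{ab}} X \to \pi_0 UF\loopsp X$ is injective. I expect to establish this by constructing an explicit inverse on generators: given a basis $0$-cube (a loop) in $UF\loopsp X$, its class depends only on the homotopy class of the loop, and the additive relations imposed by passing to $UF$ are precisely the abelianization relations coming from concatenation squares via \cref{thm:pi_n_a_n_mul} and \cref{thm:homology_htpy_eq} (homotopic cubes become equal). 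Verifying that no \emph{further} relations are introduced — i.e.\ that the degenerate/connection quotient in the Moore complex contributes nothing beyond abelianization in degree one — is the delicate combinatorial point, and I would handle it by carefully tracking the $0$- and $1$-dimensional pieces of the Moore complex $\delta F\loopsp X$ and identifying $H_0$ with the free abelian group on $\pi_0 \loopsp X$.
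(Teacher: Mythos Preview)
Your reduction matches the paper's line for line: factor $\loopsp\eta_X$ as $\alpha_X \circ \eta_{\loopsp X}$ via \cref{thm:alpha_eta_triangle}, reduce via the $n=0$ instance of \cref{thm:alpha_iso}/\cref{thm:alpha_conn} to get $\pi_0\alpha_X$ a bijection, and then study $\pi_0\eta_{\loopsp X}\colon \pi_1 X \to \pi_0 UF\loopsp X$. Your surjectivity argument is also the paper's.

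Where you diverge from the paper, and where there is a real gap, is the reverse kernel inclusion. You propose to identify $\pi_0 UF\loopsp X$ with $\pH_0(\loopsp X)$, the free abelian group on $\pi_0\loopsp X = \pi_1 X$ modulo the basepoint, and then argue that ``the additive relations imposed by passing to $UF$ are precisely the abelianization relations coming from concatenation squares.'' But concatenation squares are \emph{not} $1$-cubes of $\loopsp X$: by \cref{def:concat} their $\face{}{1,0}$-face is the nontrivial loop $u$, not $x\degen{}{1}$. Hence they contribute no relations in $\pi_0 UF\loopsp X$, and \cref{thm:pi_n_a_n_mul} (which requires $n\ge 1$) does not apply at the level you are working. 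Under your own identification the map $\pi_0\eta_{\loopsp X}$ is simply $g\mapsto g$ into $\Z[\pi_1 X]/\langle e\rangle$, so tracking the Moore complex of $F\loopsp X$ in degrees $0$ and $1$ will not produce the relation $[ab]=[a]+[b]$ you need; those relations only appear after composing with $\alpha_X$.

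The paper closes this step by a geometric move you do not mention. Given $[a]$ mapping to zero, it writes the witnessing $1$-cube of $UF\loopsp X$ as a signed formal sum $\sum c_i u_i$ with each $u_i\in X_2$ an honest square having degenerate $\face{}{1,0}$- and $\face{}{1,1}$-faces, pairs up their $\face{}{2,0}$- and $\face{}{2,1}$-faces using linear independence in the free abelian group, and then \emph{horizontally concatenates} the squares $u_1,\dots,u_n$ inside $X$ itself. The resulting single $2$-cube of $X$ exhibits $\prod_i [u_i\face{}{2,0}]^{c_i}=\prod_i [u_i\face{}{2,1}]^{c_i}$ in $\pi_1 X$, whence $[a]=[x\degen{}{1}]$ in the abelianization. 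This horizontal-concatenation step, converting a formal linear witness into a genuine $2$-cube of $X$, is the ingredient your plan is missing.
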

\begin{proof}
    It suffices to show the map $\pi_0 \loopsp \eta_X \from \pi_0 \loopsp X \to \pi_0 \loopsp UF X$ is surjective and that the pre-image of $[0]$ is $[\pi_1 X, \pi_1 X]$.

    \cref{thm:alpha_eta_triangle} gives that the diagram
    \[ \begin{tikzcd}
        {} & UF \loopsp X \ar[rd, "\alpha_X"] \\
        \loopsp X \ar[rr, "\loopsp \eta_X"] \ar[ur, "\eta_{\loopsp X}"] && \loopsp UF X
    \end{tikzcd} \]
    commutes.
    Applying $\pi_0$ to this diagram, we obtain:
    \[ \begin{tikzcd}
        {} & \pi_0 UF \loopsp X \ar[rd, "\pi_0 \alpha_X"] \\
        \pi_0 \loopsp X \ar[rr, "\pi_0 \loopsp \eta_X"] \ar[ur, "\pi_0 \eta_{\loopsp X}"] && \pi_0 \loopsp UF X
    \end{tikzcd} \]
    \cref{thm:alpha_conn} gives that the right map is a bijection.
    From this, it suffices to show the left map $\pi_0 \eta_{\loopsp X}$ is surjective and the preimage of $x\degen{1}{1}$ is the commutator $[\pi_1 X, \pi_1 X]$.

    Each basis cube is in the image of $\eta_{\loopsp X} \from \loopsp X \to UF \loopsp X$.
    Surjectivity then follows from \cref{thm:pi_n_a_n_mul}.

    To determine the pre-image of $x\degen{1}{1}$, we first note that $[\pi_1(X, x), \pi_1(X, x)]$ is contained in this pre-image by \cref{thm:pi_n_a_n_comm}.
    Showing the reverse inclusion, if $[a] \in \pi_0 \loopsp X$ is such that $[a] = [x\degen{}{1}]$ in $\pi_0 UF \loopsp X$, then there is an element $\sum\limits_{i=1}^n c_i u_i \in F X_2$ whose $\face{}{2,0}$-face is $a$ and whose other faces are $x\degen{}{1}$.
    For $c_i \neq -1, 1$, we write $c_i u_i$ using multiple sums so, without loss of generality, we may assume $c_i = \pm 1$ for all $i = 1, \dots, n$.
    As $\sum\limits_{i=1}^n c_i u_i \face{}{2,1} = x\degen{}{1}$, uniqueness of linear combinations gives that, for each term $u_j$, there is a term $u_k$ such that $u_j \face{}{2,1} = u_k \face{}{2,1}$ and $c_j = -c_k$. 
    As $\sum\limits_{i=1}^n c_i u_i \face{}{2,0} = a$, we have that for each term $u_j$, there is a term $u_k$ such that $u_j \face{}{2,0} = u_k \face{}{2,0}$ and $c_j = -c_k$ except for some $u_i$ such that $u_i \face{}{2,0} = a$ and $c_i = 1$.
    It follows that $\prod\limits_{i=1, \dots, n} [u_i \face{}{2,0}]^{c_i} = [a]$ and $\prod\limits_{i=1, \dots, n} [u_i \face{}{2,1}]^{c_i} = [x\degen{1}{1}]$ in the abelianization $\pi_1X / [\pi_1 X, \pi_1 X]$.
    As the $\face{}{1,0}$- and $\face{}{1,1}$-faces of each $u_i$ are $x\degen{}{1}$, we may horizontally compose the squares $u_1, \dots, u_n$ to obtain a square $u \in X_2$ such that
    \[ \begin{array}{l l}
        {[u\face{}{1,0}] = [x\degen{1}{1}]} & {[u\face{}{1,1}] = [x\degen{1}{1}]} \\
        {[u\face{}{2,0}] = \prod\limits_{\substack{i = 1, \dots, n}} [u_i\face{}{2,0}]^{c_i}} & {[u\face{}{2,1}] = \prod\limits_{\substack{i = 1, \dots, n}} [u_i\face{}{2,1}]^{c_i}}.
    \end{array} \]
    This square witnesses the equality $\prod\limits_{i=1, \dots, n} [u_i \face{}{2,0}]^{c_i} = \prod\limits_{i=1, \dots, n} [u_i \face{}{2,1}]^{c_i}$.
    This implies that $[a] = [x\degen{1}{1}]$ in the abelianization, thus $[a]$ lies in the commutator subgroup $[\pi_1 X, \pi_1 X]$.
\end{proof}
The result for $(n-1)$-connected Kan complexes follows.
\begin{theorem} \label{thm:hurewicz}
    Let $(X,x)$ be a pointed Kan complex.
    For $n \geq 2$, if $X$ is $(n-1)$-connected then the Hurewicz homomorphism
    \[ \pi_{n} (X, x) \to \pH_{n} (X,x) \]
    is an isomorphism.
\end{theorem}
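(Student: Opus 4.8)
The plan is to strip the Hurewicz homomorphism down to its essential content. By \cref{def:Hurewicz-map} it is the composite $\pi_n X \xrightarrow{\pi_n \eta_X} \pi_n UFX \xrightarrow{\cong} \pH_n X$, whose second leg is the natural isomorphism of \cref{thm:pi_n_h_n}. Since that leg is always invertible, it suffices to prove that the unit-induced map $\pi_n(\eta_X)\colon \pi_n X \to \pi_n UFX$ is an isomorphism for every $(n-1)$-connected pointed Kan complex $X$. I would establish this by induction on $n \geq 2$.

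The inductive mechanism is to loop once. Directly from the definitions of $\loopsp[n]$ and $\pi_n$ there is a natural identification $\pi_n(\uvar) \cong \pi_{n-1}(\loopsp(\uvar))$ carrying $\pi_n(\eta_X)$ to $\pi_{n-1}(\loopsp\eta_X)$. The commuting triangle of \cref{thm:alpha_eta_triangle} factors $\loopsp\eta_X = \alpha_X \circ \eta_{\loopsp X}$, so that
\[ \pi_{n-1}(\loopsp\eta_X) = \pi_{n-1}(\alpha_X)\circ\pi_{n-1}(\eta_{\loopsp X}). \]
Because $X$ is $(n-1)$-connected and $n \geq 2$, \cref{thm:alpha_conn} makes $\alpha_X$ an $(n-1)$-connected map, so $\pi_{n-1}(\alpha_X)$ is an isomorphism. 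Hence it remains only to show that the second factor $\pi_{n-1}(\eta_{\loopsp X})$ is an isomorphism; but this is exactly the unit half of the Hurewicz homomorphism for the Kan complex $\loopsp X$ at level $n-1$.

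I would then close the induction using the connectivity of $\loopsp X$, which is $(n-2)$-connected since $\pi_k \loopsp X \cong \pi_{k+1} X$. For $n \geq 3$ the inductive hypothesis applied to $\loopsp X$ at level $n-1 \geq 2$ shows its Hurewicz map is an isomorphism; as the \cref{thm:pi_n_h_n} leg is invertible, $\pi_{n-1}(\eta_{\loopsp X})$ is an isomorphism and the step is complete. The base case $n = 2$ is where \cref{thm:hurewicz1} enters: there $\loopsp X$ is $0$-connected, so the Hurewicz map $\pi_1\loopsp X \to \pH_1 \loopsp X$ is surjective with kernel the commutator $[\pi_1\loopsp X, \pi_1\loopsp X]$, whence $\pi_1(\eta_{\loopsp X})$ is surjective with the same kernel.

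The main obstacle is precisely this base case: to upgrade surjectivity-with-commutator-kernel to an isomorphism I need the commutator to vanish, i.e.\ that $\pi_1\loopsp X \cong \pi_2 X$ is abelian. This is the expected Eckmann--Hilton phenomenon for the fundamental group of a loop space, and I would invoke it as the one piece of input from the companion theory rather than re-derive it. Everything else is formal: the reduction to $\pi_n(\eta_X)$, the loop-space factorization of the unit through $\alpha_X$, and the connectivity bookkeeping. The only point worth checking carefully is that the identification $\pi_n \cong \pi_{n-1}\loopsp$ is natural enough to intertwine $\pi_n(\eta_X)$ with $\pi_{n-1}(\loopsp\eta_X)$, which is immediate once one unwinds the definition $\loopsp[n] = \loopsp \circ \loopsp[n-1]$.
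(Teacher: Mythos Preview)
Your proposal is correct and is essentially the paper's own argument: both reduce to showing $\pi_n(\eta_X)$ is an isomorphism, factor through the triangle of \cref{thm:alpha_eta_triangle}, invoke \cref{thm:alpha_conn} for the $\alpha$-legs, and finish via \cref{thm:hurewicz1} applied to an iterated loop space together with the abelianness of $\pi_n X$ for $n\geq 2$. The only cosmetic difference is that you run an induction peeling off one loop at a time, whereas the paper unrolls the factorization $\loopsp[n]\eta_X = (\loopsp[n-1]\alpha_X)\circ\cdots\circ(\loopsp\alpha_{\loopsp[n-2]X})\circ(\loopsp\eta_{\loopsp[n-1]X})$ in a single diagram.
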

\begin{proof}
    It suffices to show $\pi_0 \loopsp[n] \eta_X \from \pi_0 \loopsp[n] X \to \pi_0 \loopsp[n] UFX$ is a bijection.

    The diagram
    \[ \begin{tikzcd}
        {} & {\loopsp UF \loopsp[n-1] X} \ar[d, "{\loopsp \alpha_{\loopsp[n-2] X}}"] \\
        {} & {\loopsp[2] UF \loopsp[n-2] X} \ar[d, "{\loopsp[2] \alpha_{\loopsp[n-3] X}}"] \\
        {} & \vdots \ar[d, "{\loopsp[n-1] \alpha_X}"] \\
        {\loopsp[n] X} \ar[uuur, "\loopsp \eta_{\loopsp[n-1] X}", bend left] \ar[r, "{\loopsp[n] \eta_X}"] & {\loopsp[n] UFX}
    \end{tikzcd} \]
    commutes by repeated application of \cref{thm:alpha_eta_triangle}.
    Applying $\pi_0$, each map on the right side of the diagram
    \[ \begin{tikzcd}
        {} & {\pi_0 \loopsp UF\loopsp[n-1] X} \ar[d, "{\pi_0 \loopsp \alpha_{\loopsp[n-2] X}}", "\cong"'] \\
        {} & {\pi_0 \loopsp[2]UF \loopsp[n-2] X} \ar[d, "{\pi_0 \loopsp[2] \alpha_{\loopsp[n-3] X}}", "\cong"'] \\
        {} & \vdots \ar[d, "{\pi_0 \loopsp[n-1] \alpha_X}", "\cong"'] \\
        {\pi_0 \loopsp[n] X} \ar[uuur, "\pi_0 \loopsp \eta_{\loopsp[n-1] X}", bend left] \ar[r, "{\pi_0 \loopsp[n] \eta_X}"] & {\pi_0 \loopsp[n] UFX}
    \end{tikzcd} \]
    is a bijection by \cref{thm:alpha_conn}.
    The left map $\pi_0 \loopsp \eta_{\loopsp[n-1] X}$ is the Hurewicz homomorphism applied to the 0-connected Kan complex $\loopsp[n-1] X$.
    By \cref{thm:hurewicz1}, this map is surjective and its kernel is the commutator $[\pi_1 \loopsp[n-1] X, \pi_1 \loopsp[n-1]X]$.
    This commutator is trivial as $\pi_1 \loopsp[n-1] X \cong \pi_n (X, x)$ is abelian by \cite[Cor.~4.3]{carranza-kapulkin:homotopy-cubical}.
    Thus, the bottom map $\pi_0 \loopsp[n] \eta_X$ is a bijection.
\end{proof}

This completes the proof of the Hurewicz theorem.
We conclude by discussing the difficulties one might encounter trying to adapt this approach to the simplicial setting.

\begin{remark}
	The key feature of cubical sets that makes this proof possible is the fact that the $n$-cubes of $\loopsp X$ are just certain $(n+1)$-cubes of $X$ itself.
	By contrast, one typically defines the loop space of a \emph{simplicial} pointed Kan complex $X$ by
	\[ \loopsp (X, x)_n = \{ s \colon \Delta^1 \times \Delta^n \to X \ | \ \restr{s}{\{\varepsilon\} \times \Delta^n} = x \text{ for } \varepsilon = 0, 1\}. \]
	In particular, a single $n$-simplex consists of a collection of $2n$ simplicies of $X$ of dimension $(n+1)$, subject to the appropriate compatibility conditions.
	As a result, simplicial proofs of an analogous fact need to either involve more sophisticated combinatorics \cite[Thm.~13.6]{may:simplicial-objects} or rely on other less elementary tools, e.g., Serre spectral sequence (cf.~\cite[Thm.~III.3.7]{goerss-jardine}).
\end{remark}

\bibliographystyle{amsalphaurlmod}
\bibliography{all-refs.bib}

\end{document}